\documentclass[twoside,11pt]{article}
\usepackage[english]{babel}
\usepackage{amsmath}
\usepackage{amsthm}
\usepackage{amsfonts}
\usepackage{amssymb}
\usepackage{graphicx}
\usepackage{enumerate}
\usepackage{mathrsfs}
\usepackage{graphicx}

\makeatletter
\newcommand*\bigcdot{\mathpalette\bigcdot@{.5}}
\newcommand*\bigcdot@[2]{\mathbin{\vcenter{\hbox{\scalebox{#2}{$\m@th#1\bullet$}}}}}
\makeatother

       %\textheight=130truemm
       \textheight=220truemm
%
       %\textwidth=158truemm
       \textwidth=160truemm

 \hoffset=0truemm
 \voffset=0truemm

 \topmargin=0truemm
% \topmargin=-25truemm
 %\topmargin=-10truemm
       %\headheight=12pt
       %\headsep=15pt
       \oddsidemargin=0truemm
       \evensidemargin=0truemm

 \theoremstyle{plain}%{plain}
\newtheorem{theorem}{Theorem}[section]
\newtheorem{lemma}[theorem]{Lemma}%[section]
\newtheorem{proposition}[theorem]{Proposition}%[section]
%[section]
\theoremstyle{remark}
%[section]
\newtheorem{remark}[theorem]{\bf Remark}%[section]

\numberwithin{equation}{section}
%[section]
\usepackage{color}
\usepackage[colorlinks,
            linkcolor=blue,
            anchorcolor=blue,
            citecolor=red,
            ]{hyperref}

\newcommand{\be}{\begin{equation}}
\newcommand{\ee}{\end{equation}}
\newcommand{\eps}{\varepsilon}

\allowdisplaybreaks[3]

\begin{document}

\title{
\bf{Incompressible and fast rotation limits for $3$D compressible rotating Euler system with general initial data
}}

\author{
Mikihiro Fujii$^\dag$, \qquad Yang Li$^\ddag$, \qquad Pengcheng Mu$^\S$ \\  \\
$^\dag$Graduate School of Science, \\ Nagoya City University, Nagoya, 467-8501, Japan \\ Email: fujii.mikihiro@nsc.nagoya-cu.ac.jp \\ \\
$^\ddag$School of Mathematical Sciences, \\ Anhui University, Hefei, 230601, People's Republic of China \\ Email: lynjum@163.com \\ \\
$^\S$School of Mathematical Sciences, \\ Anhui University, Hefei, 230601, People's Republic of China \\ Email: mupc024@163.com \\ \\
}

\maketitle

{\centerline {\bf Abstract }}
\vspace{2mm}
{
This paper is concerned with the low Mach and Rossby number limits of $3$D compressible rotating Euler equations with ill-prepared initial data in the whole space. More precisely, the initial data is the sum of a $3$D part and a $2$D part. With the help of a suitable intermediate system, we perform this singular limit rigorously with the target system being a $2$D QG-type. This particularly gives an affirmative answer to the question raised by Ngo and Scrobogna [\emph{Discrete Contin. Dyn. Syst.}, 38 (2018), pp. 749-789]. As a by-product, our proof gives a rigorous justification from the $2$D inviscid rotating shallow water equations to the $2$D QG equations in whole space.

}

\vspace{2mm}

{\bf Keywords: }{Incompressible limit; fast rotation; Strichartz estimate; ill-prepared initial data}

\vspace{2mm}

{\bf Mathematics Subject Classification 2020:} {35B25; 35D35; 76U05  }

\tableofcontents

\section{Introduction and main results}

\subsection{Background and motivation}
We study the singular limit of strong solution to the $3$D rotating isentropic compressible Euler equations:
\begin{equation}\label{el}
\begin{cases}
\partial_t\rho+\operatorname{div}(\rho u)=0,\\
\partial_{t}(\rho u)+\operatorname{div}(\rho u\otimes u)+\frac{1}{\varepsilon}\mathbf{e}_3\times \rho u+\frac{1}{\delta^2}\nabla P(\rho)=0.
\end{cases}
\end{equation}
Here, the unknowns $\rho$ and $u=(u_1,u_2,u_3)^{\top}$ are the density and velocity field of the fluid, respectively. The pressure $P(\rho)$ is given by $P(\rho)=\gamma^{-1}\rho^{\gamma}$ with $\gamma>1$, and $\mathbf{e}_3=(0,0,1)^{\top}$ is the rotating axis. The independent variables are $t\in\mathbb{R}^{+}$,
$x=(x_h,x_3)$ with $x_h=(x_1,x_2)\in\mathbb{R}^2$ and $x_3\in\mathbb{R}$. The system \eqref{el} is written in non-dimensional form where $\varepsilon$ is the Rossby number representing the ratio of the displacement due to inertia to the displacement duo to Coriolis force, and $\delta$ is the Mach number denoting the ratio of the characteristic speed of the fluid to the speed of sound. In this paper, we use $\mathbb{R}^2$ to denote the $2$D horizontal plane, and use $\nabla_h$, $\operatorname{div}_h$ to denote the gradient and divergence operators acting only in horizontal directions, respectively.

It is well-known that, in large scale motion of geophysical fluids the Rossby number $\eps$ and the Mach number $\delta$ are both very small. Thus, it is meaningful to rigorously justify the quasi-geostrophic (QG) limit of \eqref{el} in which the two parameters vanish simultaneously in the following way:
\begin{equation}\label{lim}
\eps\to0,~~\delta\to0,~~\mathrm{while}~~\frac{\delta}{\eps}=\mathrm{constant}.
\end{equation}
For convenience, we set
\begin{equation}\label{8202}
\bar{\gamma}:=\frac{\gamma-1}{2},~~\mathrm{and}~~\nu:=\frac{\delta}{\bar{\gamma}\varepsilon},
\end{equation}
which means in the setting of this paper $\nu$ is a fixed positive constant. Exactly as \cite{Ngo-DCDS}, we use the substitution
\begin{equation*}
\rho=\big(\bar{\gamma}(1+\delta b)\big)^{1/\bar{\gamma}},
\end{equation*}
and after a few straightforward calculations reformulate \eqref{el} as
\begin{equation}\label{CEL}
\partial_t U+\frac{\bar{\gamma}}{\delta}\mathcal{L}U+\mathcal{N}(U,\nabla U)=0,
\end{equation}
where
\begin{equation}\label{e9182}
U=\begin{pmatrix}
b\\u
\end{pmatrix},~~
\mathcal{L}U=
\begin{pmatrix}
\operatorname{div}u\\
\nu\mathbf{e}_3\times u+\nabla b
\end{pmatrix}
,~~\mathrm{and}~~
\mathcal{N}(U,\nabla U)
=
\begin{pmatrix}
u\cdot\nabla b+\bar{\gamma}b\operatorname{div}u\\
(u\cdot\nabla) u+\bar{\gamma}b\nabla b
\end{pmatrix}
.
\end{equation}
In this paper, we supplement \eqref{CEL} with the initial data
\begin{equation}\label{in}
U|_{t=0}=U_0(x)=\begin{pmatrix}
b_{0}\\u_{1,0}\\u_{2,0}\\u_{3,0}\\
\end{pmatrix}(x_{h},x_3)+\begin{pmatrix}
b_0^L\\u^L_{1,0}\\u^L_{2,0}\\u^L_{3,0}\\
\end{pmatrix}(x_{h})
\end{equation}
such that
\begin{equation}\label{eq161}
\begin{split}
&b_0\in H^{m}(\mathbb{R}^{3}),~~u_0:=(u_{1,0},u_{2,0},u_{3,0})^{\top}\in H^{m}(\mathbb{R}^{3}),\\
&b^L_0\in H^{m+3}(\mathbb{R}^{2}),~~u^L_{h,0}:=(u^L_{1,0},u^L_{2,0})^{\top}\in H^{m+3}(\mathbb{R}^{2}),~~\mathrm{and}~~u^L_{3,0}\in H^{m+3}(\mathbb{R}^{2}).\\
\end{split}
\end{equation}
%Without loss of generality, w
To simplify the presentation,
we assume that $b_0$, $u_0$, $b^L_0$, $u^L_{h,0}$ and $u^L_{3,0}$ are independent of $\delta$.

We recall some previous results concerning the low Mach and Rossby number singular limits of the $3$D compressible rotating fluids. In \cite{B-F6}, E. Fereisl, I. Gallagher and A. Novotn\'{y}  first investigated the QG limit of global weak solutions to the compressible rotating Navier-Stokes equations with ill-prepared data and complete slip boundary conditions in an infinite slab $\mathbb{R}^2\times (0,1)$. Using the celebrated RAGE theorem, they proved the dispersion and justified the convergence towards the $2$D viscous quasi-geostrophic equation of the original system. After that, the incompressible and fast rotation limits of compressible rotating fluids in an infinite slab were studied in many works, such as \cite{F-2016,B-F4,B-F2,B-F1,B-F3,B-K1} for viscous fluids, and \cite{B-C1,B-T1} for inviscid fluids.

However, in the whole space case, the study of incompressible and fast rotation limits is slightly different. We first mention the work of M. Caggio and \v{S}. Ne\v{c}asov\'{a} \cite{M.Ca}. By combining the relative entropy method, the dispersion of acoustic wave, and the fact that fast rotation can weaken the nonlinear effects in Euler equation and thus extend the lifespan of the solution, the convergence of weak solutions of rotating isentropic compressible Navier-Stokes system to the strong solution of $3$D rotating incompressible Euler equation on arbitrary time interval was proved in \cite{M.Ca} as the Mach number and the viscosity coefficient tend to zero and the Rossby number is small enough but fixed. However, the limit as the Rossby number tends to zero was not explored in \cite{M.Ca}.
As far as we know, the first work concerning low Mach and Rossby number limits of the compressible rotating fluids in $3$D whole space was done by V-S. Ngo and S. Scrobogna in \cite{Ngo-DCDS}. In the aforementioned paper, the authors derived the Strichartz-type estimates for the linearized system of \eqref{CEL} in which the Rossby number and the Mach number are identical (i.e., the regime of QG limit) and the frequency is localized in $\mathcal{C}_{r,R}=\{\xi\in\mathbb{R}^3:|\xi|\leq R,|\xi_h|\geq r,|\xi_3|\geq r\}$, and identified zero as the QG limit of strong solutions to \eqref{CEL} with initial data of finite energy in $\mathbb{R}^3$, i.e., the $2$D part $(b^L_0,u^L_{1,0},u^L_{2,0},u^L_{3,0})$ in \eqref{in} equals to zero.

In fact, since the only element that has finite energy in $\mathbb{R}^3$ and belongs to the kernel of $\mathcal{L}$ is zero, the limit of solution to \eqref{CEL} with finite energy in $\mathbb{R}^3$ is, as it was shown in \cite{Ngo-DCDS}, also zero as $\delta\to0$. {\emph{However, if the initial data is the sum of a $3$D part and a $2$D part like \eqref{in}, then the solution to \eqref{CEL} will not tend to zero but converge to a $2$D system.}} More precisely, as $\eps,\delta$ tend to zero in the regime \eqref{lim}, the $3$D compressible rotating Euler equations \eqref{CEL} with the initial data \eqref{in} will converge to the equations:
\begin{equation}\label{Lim-sys}
\begin{cases}
\partial_{t}(\mathrm{curl}_h u^L_h-\nu b^L)+u^L_h\cdot\nabla_h (\mathrm{curl}_h u^L_h-\nu b^L)=0,\\
\nu u^{L}_h=\nabla_h^{\bot} b^L,\\
\partial_t u^L_3+u^L_h\cdot\nabla_h u^L_3=0,
%~~~\partial_{x_3}b^L=0,~~~\partial_{x_3}u^L_h=0,~~~\partial_{x_3}u^L_3=0, Fujii thinks that we need not write them.
\\
(b^L,u^L_h,u^L_3)|_{t=0}=(-\nu\omega^L_0,-\nabla_h^{\bot}\omega^L_0,u^L_{3,0}),
\end{cases}
\end{equation}
where $\omega^L_0=(\nu^2-\Delta_h)^{-1}(\mathrm{curl}_h u^L_{h,0}-\nu b^L_0)$. Observe that $(b^L,u_h^L)$ is the solution to the $2$D quasi-geostrophic equations (see \cite{B1}), and $u_3^L$ is governed by the $2$D transport equation. The aim of this paper is to rigorously justify this asymptotic limit for local strong solution to \eqref{CEL} with ill-prepared initial data of the type \eqref{in}.

Now we state the main difficulties and strategies of this paper. First, due to the reduction of dimension, directly showing the convergence of \eqref{CEL}-\eqref{in} to \eqref{Lim-sys} in Sobolev spaces on $\mathbb{R}^3$ seems challenging. To circumvent this difficulty, we first introduce the following equations as the intermediate system:
\begin{equation}\label{11183}
\begin{cases}
\partial_t a+w_h\cdot\nabla_h a+\bar{\gamma}a \operatorname{div}_h w_h+\frac{\bar{\gamma}}{\delta}\operatorname{div}_h w_h=0,\\
\partial_t w_h+(w_h\cdot\nabla_h) w_h+\bar{\gamma}a \nabla_h a+\frac{\bar{\gamma}}{\delta} \nu w_h^{\bot}+\frac{\bar{\gamma}}{\delta}\nabla_h a=0,\\
\partial_t w_3+w_h\cdot\nabla_h w_3=0,\\
\partial_{x_3}w_h=0,~~~\partial_{x_3}w_3=0,\\
(a,w_h,w_3)|_{t=0}=(b^L_0,u^L_{h,0},u^L_{3,0})^{\top}.
\end{cases}
\end{equation}
It is clear that \eqref{11183}$_{1,2}$ are the $2$D compressible rotating Euler equations, while \eqref{11183}$_3$  is the $2$D transport equation. The local well-posedness of strong solution to \eqref{11183} is standard and is stated in Lemma \ref{12084} below. Now, setting
\begin{equation*}
w:=(w_h,w_3)^{\top},~~\vartheta:=b-a,~~v:=u-w,
\end{equation*}
and subtracting \eqref{11183} from \eqref{CEL}, we get the following $3$D perturbed system which can be seen as a $3$D compressible rotating Euler equations supplemented with some linear terms if we regard $(a,w)$ as given functions for the moment:
\begin{equation}\label{11191}
\begin{cases}
\partial_t \vartheta+v\cdot\nabla \vartheta+\bar{\gamma}\vartheta \operatorname{div}v+\frac{\bar{\gamma}}{\delta}\operatorname{div}v=-w\cdot\nabla \vartheta-v\cdot\nabla a-\bar{\gamma}a\operatorname{div}v-\bar{\gamma}\vartheta\operatorname{div}w,\\
\partial_t v+(v\cdot\nabla) v+\bar{\gamma}\vartheta \nabla \vartheta+\frac{\bar{\gamma}}{\delta}\nu \mathbf{e}_3\times v+\frac{\bar{\gamma}}{\delta}\nabla \vartheta=-(w\cdot\nabla) v-(v\cdot\nabla) w-\bar{\gamma}a \nabla \vartheta-\bar{\gamma}\vartheta \nabla a,\\
(\vartheta,v)|_{t=0}=(b_0,u_0)^{\top}.
\end{cases}
\end{equation}
We first address the unique existence of the local strong solution to \eqref{11191} by smooth approximations and establish its uniform estimates with respect to $\eps$ and $\delta$. 
% Then, we establish the space-time Strichartz estimates for the {\color{red}linearized} solution to \eqref{11191}. F: This sentense seems unnecessary.
Next, we study the asymptotic limit from the intermediate system \eqref{11183} to the target system \eqref{Lim-sys}. This process is the QG limit of $2$D compressible rotating Euler equations. To this end, we split the solution into the fast oscillating wave and the slow part. By establishing suitable Strichartz estimates, the fast oscillating wave is shown to vanish as $\delta\rightarrow 0$. The convergence of slow part to the target system \eqref{Lim-sys} is proved by the energy estimate.

\subsection{Main results}
Before stating the main theorems of this paper, we give a lemma concerning the well-posedness and uniform estimates of local strong solutions to the intermediate system \eqref{11183}, which can be obtained by using the theory established by S. Klainerman and A. Majda \cite{K-M-1,K-M-2} for symmetric hyperbolic system and the classical results for transport equation:

\begin{lemma}\label{12084}
Let $(b_0^L,u_{h,0}^L, u^L_{3,0})\in H^{m+3}(\mathbb{R}^2)$ with a nonnegative integer $m$. Then for any $\eps,\delta>0$ satisfying \eqref{lim}, the system \eqref{11183} admits a unique solution $(a,w_h,w_3)\in C([0,T^*];H^{m+3}(\mathbb{R}^2))$ for some $T^*>0$, and satisfies
\begin{equation}\label{12092}
\sup\limits_{t\in[0,T^*]}\|(a,w_h,w_3)(t)\|_{H^{m+3}(\mathbb{R}^2)}\leq C\|(b_0^L,u_{h,0}^L,u^L_{3,0})\|_{H^{m+3}(\mathbb{R}^2)},
\end{equation}
where the time $T^*$ and the positive constant $C$ are independent of $\delta$.
\end{lemma}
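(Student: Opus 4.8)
The plan is to treat the two subsystems of \eqref{11183} separately, noting that they are only coupled in one direction: the pair $(a,w_h)$ solves a closed $2$D system (the compressible rotating Euler equations \eqref{11183}$_{1,2}$ together with the constraint $\partial_{x_3}w_h=0$, which is automatically propagated since the data and the equation involve no $x_3$-dependence), and then $w_3$ solves the linear transport equation \eqref{11183}$_3$ with the already-constructed divergence-free-free velocity $w_h$ as a given coefficient. First I would rewrite \eqref{11183}$_{1,2}$ in the symmetric hyperbolic form $A_0(V)\partial_t V+\sum_{j=1}^2 A_j(V)\partial_{x_j}V+\tfrac{1}{\eps}BV=0$ for $V=(a,w_h)^\top$, with $A_0$ a positive-definite symmetrizer, the $A_j$ symmetric, and $B$ the (antisymmetric, hence skew) singular operator coming from the Coriolis term $\tfrac{\bar\gamma}{\delta}\nu w_h^\bot$ together with the acoustic part $\tfrac{\bar\gamma}{\delta}\nabla_h a$, $\tfrac{\bar\gamma}{\delta}\operatorname{div}_h w_h$. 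The point is that the singular term has the form $\tfrac{1}{\eps}BV$ with $B$ skew-symmetric with respect to a symmetrizer independent of $V$, so it contributes nothing to the $L^2$-type energy; this is exactly the structure to which the Klainerman--Majda theory \cite{K-M-1,K-M-2} applies, yielding a unique solution $(a,w_h)\in C([0,T^*];H^{m+3}(\R^2))$ on a time interval $T^*$ and with a bound as in \eqref{12092}, both \emph{independent of} $\delta$ (equivalently of $\eps$) precisely because the singular term drops out of every energy identity.

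Once $w_h\in C([0,T^*];H^{m+3}(\R^2))\hookrightarrow C([0,T^*];\mathrm{Lip}(\R^2))$ (here $m\ge 0$ so $m+3\ge 3>1+2/2$, which gives the needed Lipschitz control) is in hand, I would solve \eqref{11183}$_3$, $\partial_t w_3+w_h\cdot\nabla_h w_3=0$, by the standard theory for linear transport equations with a Lipschitz divergence-free (or merely Lipschitz) drift: this produces a unique $w_3\in C([0,T^*];H^{m+3}(\R^2))$, and the Sobolev norm estimate follows from a commutator estimate of the form $\|[\Lambda^{m+3},w_h\cdot\nabla_h]w_3\|_{L^2}\lesssim \|\nabla_h w_h\|_{L^\infty}\|w_3\|_{H^{m+3}}+\|w_h\|_{H^{m+3}}\|\nabla_h w_3\|_{L^\infty}$ plugged into a Gr\"onwall argument, again with a constant independent of $\delta$ since $w_3$'s equation contains no singular term. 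Combining the two pieces and possibly shrinking $T^*$ to the minimum of the two existence times gives the full statement.

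The main obstacle, such as it is, is purely bookkeeping rather than conceptual: one must verify that the singular operator $\tfrac{1}{\eps}B$ in the symmetrized form of \eqref{11183}$_{1,2}$ is genuinely skew-adjoint for a \emph{constant}-coefficient symmetrizer (so that it is invisible to the energy method and the existence time and bounds are $\delta$-uniform), and one must check that the quasilinear structure is non-degenerate on a neighbourhood of the data so that Klainerman--Majda applies — concretely this requires $a$ to stay in the range where $\bar\gamma(1+\delta b)>0$, i.e. $a$ bounded, which is guaranteed on a short time interval by the continuity of the solution and the smallness is preserved uniformly in $\delta$ because, again, the singular term does not enter the a priori estimates. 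No genuinely new idea is needed beyond organizing these classical facts; the $x_3$-independence of the data makes the reduction to a genuinely $2$D problem immediate.
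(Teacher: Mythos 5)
Your proposal is correct and follows essentially the same route as the paper, which proves the lemma only by invoking the Klainerman--Majda theory \cite{K-M-1,K-M-2} for the symmetric hyperbolic subsystem \eqref{11183}$_{1,2}$ (where the singular term is skew-symmetric for a constant symmetrizer, hence the $\delta$-uniform time and bound) together with classical transport-equation estimates for $w_3$ with the Lipschitz drift $w_h$. Your write-up merely fills in the standard details the paper leaves implicit (the one slip, calling $w_h$ divergence-free, is harmless since the transport estimate only needs $\nabla_h w_h\in L^\infty$).
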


The main results of this paper are stated in the following two theorems:

\begin{theorem}\label{th-ex}$\mathrm{(Local~well\mbox{-}posedness~and~uniform~estimates).}$
Let $U_{0}$ be given by \eqref{in}-\eqref{eq161} with given integer $m\geq3$. Then, there exists a positive time $T$ such that for any $\delta>0$, the Cauchy problem of \eqref{CEL} with initial value $U_{0}$ admits a unique solution $U$ on $[0,T]$ with the form
\begin{equation}\label{eq1101}
U(t,x)=(a,w)^{\top}(t,x_{h})+(\vartheta,v)^{\top}(t,x),
\end{equation}
where $(a,w)^{\top}=(a,w_h,w_3)^{\top}\in C([0,T];H^{m+3}(\mathbb{R}^{2}))$ and $(\vartheta,v)^{\top}\in C([0,T];H^{m}(\mathbb{R}^{3}))$ are solutions to \eqref{11183} and \eqref{11191}, respectively. Moreover, there exists a positive constant $C=C(m,U_0)$, independent of $\delta$, such that
\begin{equation*}
\sup\limits_{t\in[0,T]}\|(a,w)(t)\|_{H^{m+3}(\mathbb{R}^2)}+\sup\limits_{t\in[0,T]}\|(\vartheta,v)(t)\|_{H^{m}(\mathbb{R}^3)}\leq C.
\end{equation*}
%where the positive constant $C$ is independent of $\eps$ and $\delta$.
\end{theorem}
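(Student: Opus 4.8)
The plan is to prove Theorem \ref{th-ex} in two stages: first invoke Lemma \ref{12084} to obtain $(a,w)$ on a time interval $[0,T^*]$ with the stated uniform bound in $H^{m+3}(\mathbb{R}^2)$; then solve the perturbed system \eqref{11191} on a possibly shorter interval $[0,T]\subseteq[0,T^*]$, treating $(a,w)$ as given coefficients, and establish uniform-in-$\delta$ bounds for $(\vartheta,v)$ in $H^m(\mathbb{R}^3)$. The decomposition \eqref{eq1101} is then immediate by construction, since $U=(a,w)^\top+(\vartheta,v)^\top$ solves \eqref{CEL} with data $U_0$ exactly when $(a,w)$ solves \eqref{11183} and $(\vartheta,v)$ solves \eqref{11191}; uniqueness for \eqref{CEL} follows from uniqueness for each subsystem plus a Gronwall argument on the difference of two putative solutions.

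For the existence and uniform estimates for \eqref{11191}, I would first observe that the principal part of the system — $\partial_t\vartheta+v\cdot\nabla\vartheta+\bar\gamma\vartheta\,\mathrm{div}\,v+\frac{\bar\gamma}{\delta}\mathrm{div}\,v$ and $\partial_t v+(v\cdot\nabla)v+\bar\gamma\vartheta\nabla\vartheta+\frac{\bar\gamma}{\delta}\nu\,\mathbf{e}_3\times v+\frac{\bar\gamma}{\delta}\nabla\vartheta$ — is a symmetric hyperbolic system in the sense of Klainerman–Majda \cite{K-M-1,K-M-2}: the singular operator $\frac{\bar\gamma}{\delta}\mathcal L$ is skew-symmetric in $L^2$ (the Coriolis term $\mathbf{e}_3\times v$ is pointwise skew, and $(\mathrm{div},\nabla)$ is skew after symmetrization), so it contributes nothing to the $L^2$ energy, and likewise nothing to the $\dot H^s$ energy since it commutes with $\Lambda^s=(1-\Delta)^{s/2}$. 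Hence a standard construction — Friedrichs mollification or iteration on the linearized problem, yielding smooth approximate solutions $(\vartheta^\eta,v^\eta)$ — combined with the a priori estimate below gives a solution in $C([0,T];H^m(\mathbb{R}^3))$ on an interval independent of $\delta$. For the a priori estimate I would apply $\Lambda^m$ to \eqref{11191}, pair with $(\Lambda^m\vartheta,\Lambda^m v)$, and integrate. The singular terms drop; the quadratic terms $v\cdot\nabla\vartheta$, $(v\cdot\nabla)v$, $\bar\gamma\vartheta\nabla\vartheta$, $\bar\gamma\vartheta\,\mathrm{div}\,v$ are handled by Moser-type commutator and product estimates, giving a contribution $\lesssim \|(\vartheta,v)\|_{H^m}^3$. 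The linear coupling terms on the right — $w\cdot\nabla\vartheta$, $v\cdot\nabla a$, $\bar\gamma a\,\mathrm{div}\,v$, $\bar\gamma\vartheta\,\mathrm{div}\,w$, $(w\cdot\nabla)v$, $(v\cdot\nabla)w$, $\bar\gamma a\nabla\vartheta$, $\bar\gamma\vartheta\nabla a$ — are at worst linear in $(\vartheta,v)$ with coefficients controlled by $\|(a,w)\|_{H^{m+1}}\le\|(a,w)\|_{H^{m+3}}$, so by the Lemma \ref{12084} bound they contribute $\lesssim C(U_0)\,(1+\|(\vartheta,v)\|_{H^m}^2)$. Note this is exactly why \eqref{11183} is solved at the higher regularity $H^{m+3}$: the coupling terms involve $\nabla a$, $\nabla w$ differentiated up to order $m$, so we need $(a,w)\in H^{m+1}$ at least, with the margin $m+3$ presumably wanted for the later limit analysis. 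Altogether $\frac{d}{dt}\|(\vartheta,v)\|_{H^m}^2\lesssim (1+\|(\vartheta,v)\|_{H^m})\,\|(\vartheta,v)\|_{H^m}^2 + C(U_0)$, and a continuation/Gronwall argument yields a $\delta$-independent lifespan $T$ and a $\delta$-independent bound.

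The main obstacle, as usual for these singular-limit problems, is ensuring the lifespan $T$ and the bound do \emph{not} degenerate as $\delta\to0$. This is secured precisely by the skew-symmetry of $\frac{\bar\gamma}{\delta}\mathcal L$: because this operator is antisymmetric and commutes with $\Lambda^m$, the energy identity is genuinely free of any $\delta^{-1}$ factor, so the Gronwall constant depends only on $m$, $U_0$, and the norms from Lemma \ref{12084}. One subtlety worth care is that the right-hand side of \eqref{11191} also contains no singular prefactor (all the $\frac{\bar\gamma}{\delta}$ terms sit on the left and have been accounted for), so there is genuinely nothing that blows up. A second, more technical, point is the mollified approximation scheme: one must check that the commutators $[\Lambda^m,\text{mollifier}]$ and the regularized singular term remain uniformly bounded, but this is routine once the clean energy identity is in place. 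Uniqueness on $[0,T]$ follows by writing the equation for the difference of two solutions (with the same data), which is linear with $H^m$-bounded coefficients and again has skew-symmetric singular part, so the $L^2$ (or $H^{m-1}$) norm of the difference satisfies a closed linear Gronwall inequality and vanishes.
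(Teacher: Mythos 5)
Your proposal follows essentially the same route as the paper: combine the $\delta$-uniform bound for the 2D intermediate system from Lemma \ref{12084} with an energy estimate for the 3D perturbed system \eqref{11191} in which the singular term $\frac{\bar{\gamma}}{\delta}\mathcal{L}$ drops by skew-symmetry, Moser-type estimates handle the quadratic terms, the linear coupling terms carry coefficients from the 2D solution, and existence/uniqueness follow from a Friedrichs mollification scheme plus Gronwall, exactly as in Theorem \ref{Exis-Mid}. The only slight imprecision is that the 2D coefficients $(a,w)$, not being in $L^2(\mathbb{R}^3)$, must be measured in $W^{m+1,\infty}(\mathbb{R}^2)$ (sup-norms on the 2D factor in every product), which requires a bit more than $H^{m+1}(\mathbb{R}^2)$; the paper obtains this from the embedding $H^{m+3}(\mathbb{R}^2)\hookrightarrow W^{m+1,\infty}(\mathbb{R}^2)$, so your use of the $H^{m+3}$ bound still closes the argument.
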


\begin{theorem}\label{th-main} $\mathrm{(Convergence).}$
Let $m\geq3$ and
\begin{equation*}
U(t,x)=(a,w_h,w_3)^{\top}(t,x_{h})+(\vartheta,v)^{\top}(t,x)\in C([0,T];H^{m+3}(\mathbb{R}^{2}))+C([0,T];H^{m}(\mathbb{R}^{3}))
\end{equation*}
be the solution to \eqref{CEL}-\eqref{in} given by Theorem \ref{th-ex}. Then for any $q\in(2,\infty)$,
\begin{equation*}
\|(\vartheta,v)\|_{L^q(0,T;W^{m-3,\infty}(\mathbb{R}^3))}\leq C\delta^{\frac{1}{q}},
\end{equation*}
and $(a,w_h,w_3)^{\top}$ has a decomposition $(a,w_h,w_3)^{\top}=(a^S,w^S_h,w_3)^{\top}+(a^F,w^F_h,0)^{\top}$ such that
\begin{equation*}
\begin{split}
&\|a^S-b^L\|_{L^{\infty}(0,T;H^{m}(\mathbb{R}^2))}+\|(w^S_h-u^L_h,w_3-u^L_3)\|_{L^{\infty}(0,T;H^{m-1}(\mathbb{R}^2))}\\
&+\|(a^F,w^F_h)\|_{L^q(0,T;W^{m-1,\infty}(\mathbb{R}^2))}\leq C\delta^{\frac{1}{q}},
\end{split}
\end{equation*}
where $(b^L,u^L_h,u^L_3)\in C([0,T];H^{m+3}(\mathbb{R}^2))$ is the strong solution to the equations \eqref{Lim-sys}, and the positive constants $C$ are independent of $\delta$.

\end{theorem}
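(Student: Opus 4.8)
The plan is to prove Theorem \ref{th-main} in two essentially independent blocks, corresponding to the two singular limits that have been disentangled by the intermediate system \eqref{11183}. The first block handles the $3$D perturbation $(\vartheta,v)$ solving \eqref{11191}; the second handles the QG limit of the $2$D compressible rotating Euler system \eqref{11183} towards \eqref{Lim-sys}.

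\textbf{Step 1: Strichartz decay for the $3$D perturbation.} From Theorem \ref{th-ex} we already have the uniform bound $\sup_{t\in[0,T]}\|(\vartheta,v)(t)\|_{H^m(\mathbb{R}^3)}\le C$. To upgrade this to the decay $\|(\vartheta,v)\|_{L^q(0,T;W^{m-3,\infty})}\le C\delta^{1/q}$, I would view \eqref{11191} as the linear wave-rotation group $e^{-t\frac{\bar\gamma}{\delta}\mathcal{L}}$ acting on $(\vartheta,v)$, driven by a forcing term $F$ that collects all the quadratic terms $v\cdot\nabla\vartheta$, $(v\cdot\nabla)v$, etc., together with the linear coupling terms in $(a,w)$. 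The key point is that the full symbol of $\mathcal{L}$ on $\mathbb{R}^3$ (not the frequency-truncated version of \cite{Ngo-DCDS}) governs a genuinely dispersive system: the characteristic roots are $0$ and $\pm\sqrt{|\xi|^2+\nu^2\xi_h^2/|\xi|^2}$-type Poincaré frequencies, and since here $\nu$ is a fixed constant one has a non-degenerate dispersion relation whose Hessian has rank $\ge 1$ on the relevant cone, yielding a Strichartz estimate of the form $\|e^{-t\frac{\bar\gamma}{\delta}\mathcal{L}}P_{j}f\|_{L^q_tL^\infty_x}\lesssim \delta^{1/q}2^{\sigma j}\|f\|_{L^2}$ after Littlewood–Paley localization, where the $\delta^{1/q}$ comes from the time-rescaling $t\mapsto t\bar\gamma/\delta$ in the stationary-phase bound. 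Summing over dyadic blocks and paying three derivatives to absorb the summation and the low/high frequency endpoints gives the $W^{m-3,\infty}$ loss. One then runs Duhamel on $F$ and closes using the uniform $H^m$ bounds from Theorem \ref{th-ex} for $(\vartheta,v)$ and Lemma \ref{12084} for $(a,w)$; the nonlinear and linear forcing terms are estimated in $L^1_tH^{m-1}$ or $L^1_tL^2$ and fed through the inhomogeneous Strichartz estimate, which costs no extra power of $\delta$.

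\textbf{Step 2: QG limit of the $2$D intermediate system.} Here I would apply the classical strategy for the incompressible/QG limit of the rotating shallow-water (equivalently $2$D compressible rotating Euler) equations. Decompose $(a,w_h)=\mathcal{Q}(a,w_h)+\mathcal{P}(a,w_h)$ into the "fast" part in the range of the $2$D wave-rotation operator $\mathcal{L}_h$ (this is $(a^F,w^F_h)$) and the "slow" part in its kernel, which after the change of unknowns corresponds to the QG variable. The slow part $w_3$ is simply transported and its limit $u^L_3$ solves the $2$D transport equation, so the difference $w_3-u^L_3$ is handled by a Gronwall argument once the velocity difference $w^S_h-u^L_h$ is controlled; this accounts for the slightly weaker $H^{m-1}$ norm for $w_3$. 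For the fast part, one derives a $2$D Strichartz estimate for the semigroup generated by $\frac{\bar\gamma}{\delta}\mathcal{L}_h$ (whose dispersion relation $\pm\sqrt{|\xi_h|^2+\nu^2}$ is non-degenerate because of the $\nu^2$ gap), giving $\|(a^F,w^F_h)\|_{L^q(0,T;W^{m-1,\infty}(\mathbb{R}^2))}\le C\delta^{1/q}$ by the same time-rescaling mechanism as in Step 1. For the slow part, the QG variable $q^S:=\mathrm{curl}_h w^S_h-\nu a^S$ satisfies, after projecting \eqref{11183}, a transport equation perturbed by terms that are either quadratic in the fast part (hence $o(1)$ in $L^1_tL^2$ by the Strichartz decay, using bilinear/product estimates) or quadratic interactions of slow with fast; comparing with the limit equation \eqref{Lim-sys}$_1$ for $\mathrm{curl}_h u^L_h-\nu b^L$ and estimating the difference in $H^{m-1}$ (or $H^m$ for $a^S$ after inverting the elliptic relation $\nu u_h=\nabla_h^\perp b^L$) via an energy estimate and Gronwall yields the stated convergence rate $\delta^{1/q}$.

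\textbf{Main obstacle.} I expect the principal difficulty to be establishing the global-in-frequency Strichartz estimate in Step 1 with the honest three-derivative loss and the sharp $\delta^{1/q}$ rate — in particular, handling the frequencies where the dispersion of the $\mathcal{L}$-symbol degenerates (near $\xi_h=0$ or $\xi_3=0$, which is exactly why \cite{Ngo-DCDS} truncated to the cone $\mathcal{C}_{r,R}$), and showing that these degenerate regions contribute only through the already-controlled $H^m$ energy norm rather than spoiling the $L^q_tL^\infty_x$ bound. Closing the bootstrap requires that the forcing in \eqref{11191} be estimated in Strichartz-compatible norms uniformly in $\delta$, which is where the uniform $H^m$ estimates from Theorem \ref{th-ex} are essential and where care is needed so that no term secretly carries a negative power of $\delta$. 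A secondary technical point is the bookkeeping of derivative counts: tracking why one loses exactly three derivatives for $(\vartheta,v)$ but only one for the $2$D fast part, and ensuring the product estimates in the slow-part energy argument close at the regularity levels $H^m$ and $H^{m-1}$ stated in the theorem.
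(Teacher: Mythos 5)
Your overall architecture coincides with the paper's: Theorem \ref{th-main} is obtained by combining the Strichartz decay for the $3$D perturbation $(\vartheta,v)$ of \eqref{11191} (Theorem \ref{Exis-Mid}) with the spectral fast/slow decomposition of the $2$D system \eqref{12054}, Strichartz decay of the fast part, and an energy--Gronwall argument for the potential-vorticity error $\varpi=\mathrm{curl}_h w_h^S-\nu a^S-(\mathrm{curl}_h u_h^L-\nu b^L)$ together with the elliptic relation $\frac1\nu\Delta_h(a^S-b^L)-\nu(a^S-b^L)=\varpi$ and a transport equation for $w_3-u_3^L$; your Step 2 is essentially Section \ref{sec:QG} of the paper.

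The substantive point you leave open --- your ``main obstacle'', the global-in-frequency $3$D Strichartz bound --- is precisely what the paper does \emph{not} prove by a stationary-phase argument on a cone: it is imported from Lemma 3.2 of \cite{B-Fujii} (Lemma \ref{lem-Fujii} here), which gives frequency-localized estimates for \emph{all} $k\in\mathbb{Z}$ (no truncation to $\mathcal{C}_{r,R}$ as in \cite{Ngo-DCDS}), with a dichotomy at the fixed threshold $2^k\lessgtr\delta/\varepsilon=\bar{\gamma}\nu$ and a loss of $3(\tfrac12-\tfrac1r)$ derivatives at high frequencies versus $3(\tfrac12-\tfrac1r-\tfrac1q)$ at low ones; summing these against the uniform $H^{m}$ energy is what yields exactly the $W^{m-3,\infty}$ loss, rather than a ``rank $\geq 1$ Hessian on the relevant cone'' argument (also, the $3$D Poincar\'e frequency is of the type $\pm\sqrt{|\xi|^2+\nu^2\xi_3^2/|\xi|^2}$, not $\xi_h^2$). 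Likewise, your claim that the $2$D relation $p(\eta)=\sqrt{\nu^2+|\eta|^2}$ is non-degenerate ``because of the $\nu^2$ gap'' holds only at low frequencies: the Hessian determinant degenerates like $\nu^2|\eta|^{-4}$ as $|\eta|\to\infty$, and the paper's Proposition \ref{pro-dis} compensates with the weight $\mathcal{M}_k=2^{3k}$ for $2^k>\nu$, i.e.\ slower dispersive decay and an extra $3/q$-derivative loss at high frequencies, which is then absorbed by the $H^{m+3}$ regularity of the $2$D data in \eqref{12083}. Once these two frequency-localized estimates are supplied (by citation or by reproducing the proof of Proposition \ref{pro-dis}), the remainder of your plan --- Duhamel with the uniform $H^m$ bounds for the $3$D block, and Gronwall for $\varpi$ and $w_3-u_3^L$ driven by the $L^q_t W^{m-1,\infty}$ smallness of $W^F$ --- closes exactly as in the paper.
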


\begin{remark}
Let us provide some comments on the main theorems.
\begin{enumerate}[(1)]
    \item{
    In their seminal work, J.-Y. Chemin et al. \cite{J-Y} considered the incompressible rotating Navier--Stokes system with initial data being the sum of a $2$D part and a $3$D part. Based on the Strichartz estimate, they proved the convergence from the original system to the $2$D incompressible Navier--Stokes system when the Rossby number goes to zero. However, in the context of compressible inviscid fluids, there seems no previous study handling the initial data of the sum form.
    }
    \item{
    In the particular case $P(\rho)=\rho^2$, our calculations in Section \ref{sec:QG} provide a rigorous proof from the $2$D compressible rotating shallow water system to the $2$D QG system, which is of independent interest.
    }
    
    \item {By the continuation methods developed by \cite{B4265,b-Takada-2016-Jap}, we believe that the existence times $T^*$ and $T$ may be chosen arbitrarily large provided that $\delta$ is sufficiently small. However, since our aim is the singular limit of the solution around a two-dimensional flow, we shall not go any deeper in the direction of long-time existence.
    }
\end{enumerate}
\end{remark}

We give some notations which will be used in the sequel. For a vector field $v=(v_1,v_2,v_3)$, we use $v_h=(v_1,v_2)$ to denote the horizontal components. For a function $f$, we use $\widehat{f}$ to denote its Fourier transform, and use $\mathcal{F}^{-1}f$ to denote its inverse. We define $\sigma(D)f(x):=\mathcal{F}^{-1}(\sigma(\xi)\widehat{f}(\xi))$ for $\sigma=\sigma(\xi)$, and use $C$ to denote generic positive constants changing from line to line.

\section{Local well-posedness and dispersion of the 3D perturbed system}\label{sec2}
In this section, we prove the local well-posedness of strong solutions to the $3$D perturbed system \eqref{11191} which yields Theorem \ref{th-ex}. Then we show that the unique solution to \eqref{11191} reduces to zero as {\color{blue}$\delta\to0$}. By setting $V:=(\vartheta,v)^{\top}$, we may rewrite \eqref{11191} as
\begin{equation}\label{12031}
\partial_t V+\frac{\bar{\gamma}}{\delta}\mathcal{L} V+\mathcal{N}(V,\nabla V)=\mathcal{G}(V),~~~V|_{t=0}=V_0:=(b_0,u_0)^{\top},
\end{equation}
where
\begin{equation*}
\mathcal{G}(V)=-(w\cdot\nabla) V-
\begin{pmatrix}
\bar{\gamma} a\operatorname{div}v\\\bar{\gamma}a\nabla \vartheta
\end{pmatrix}
-
\begin{pmatrix}
v\cdot\nabla a\\(v\cdot\nabla) w
\end{pmatrix}
-
\begin{pmatrix}
\bar{\gamma}\vartheta \operatorname{div}w\\
\bar{\gamma}\vartheta \nabla a
\end{pmatrix}
,
\end{equation*}
and $\mathcal{L}$, $\mathcal{N}(\cdot,\cdot)$ are defined in \eqref{e9182}.

\begin{theorem}\label{Exis-Mid}
Let $m\geq 3$ be an integer and let $(b_0,u_0)\in H^{m}(\mathbb{R}^3)$, and $(a,w)\in C([0,T^*];H^{m+3}(\mathbb{R}^2))$ for some $0<T^*<\infty$. Then there exists a positive time $T\in(0,T^*]$ which is independent of $\delta$, such that the system \eqref{12031} admits a unique solution $V$ on $[0,T]$, belongs to $C([0,T];H^{m}(\mathbb{R}^3))$, and satisfies
\begin{equation}\label{12032}
\sup\limits_{t\in[0,T]}\|V(t)\|_{H^{m}(\mathbb{R}^3)}\leq C\big(\|(b_0,u_0)\|_{H^{m}(\mathbb{R}^3)},\|(a,w)\|_{L^{\infty}(0,T^*;W^{m+1,\infty}(\mathbb{R}^2))},T\big).
\end{equation}
Furthermore, for any $q\in(2,\infty)$, it holds
\begin{equation}\label{12061}
\|V\|_{L^q(0,T;W^{m-3,\infty}(\mathbb{R}^3))}\leq C\big(\|(b_0,u_0)\|_{H^{m}(\mathbb{R}^3)},\|(a,w)\|_{L^{\infty}(0,T^*;W^{m+1,\infty}(\mathbb{R}^2))},T\big)\delta^{\frac{1}{q}}.
\end{equation}
\end{theorem}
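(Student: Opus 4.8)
The plan is to prove Theorem \ref{Exis-Mid} in two independent parts: first the local existence, uniqueness and uniform-in-$\delta$ bound \eqref{12032}, and then the dispersive decay \eqref{12061}. For the first part I would treat \eqref{12031} as a symmetric hyperbolic system with a skew-symmetric singular term. The point is that the singular operator $\frac{\bar\gamma}{\delta}\mathcal{L}$ is, after the Klainerman--Majda symmetrization (which for \eqref{e9182} amounts to the natural $L^2$ inner product on $(b,u)$), antisymmetric: $\langle \mathcal{L}V,V\rangle=0$, since $\mathbf{e}_3\times u$ is orthogonal to $u$ and $\langle \operatorname{div}u,b\rangle+\langle\nabla b,u\rangle=0$. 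Hence the singular coefficient $\delta^{-1}$ disappears from the basic energy identity, and likewise from the estimates for $\partial^\alpha V$, $|\alpha|\le m$, after commuting derivatives through $\mathcal{L}$ (which commutes with $\partial^\alpha$ exactly, being constant-coefficient). The remaining terms are handled as usual: the quasilinear part $\mathcal{N}(V,\nabla V)$ by Moser-type commutator and product estimates in $H^m(\mathbb{R}^3)$, $m\ge3$, so that $H^m\hookrightarrow W^{1,\infty}$; and the linear perturbation $\mathcal{G}(V)$, which is linear in $V$ with coefficients built from $(a,w)$ and their first derivatives, bounded in $L^\infty(0,T^*;W^{m+1,\infty}(\mathbb{R}^2))$ by Lemma \ref{12084}. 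This yields a differential inequality $\frac{d}{dt}\|V\|_{H^m}^2 \le C(\|(a,w)\|_{W^{m+1,\infty}})(1+\|V\|_{H^m})\|V\|_{H^m}^2$, giving a time $T\in(0,T^*]$ depending only on the quantities listed in \eqref{12032} and, crucially, not on $\delta$. Existence itself I would get by a Friedrichs mollifier / vanishing-artificial-viscosity approximation (or the abstract Klainerman--Majda scheme directly), passing to the limit using the uniform bound plus Aubin--Lions; uniqueness follows from an $L^2$ energy estimate on the difference of two solutions, again using antisymmetry of $\mathcal{L}$.

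For the dispersive bound \eqref{12061} the idea is to combine the uniform $H^m$ bound from the first part with Strichartz estimates for the linear group $e^{-\frac{\bar\gamma}{\delta}t\mathcal{L}}$. I would write \eqref{12031} in Duhamel form,
\[
V(t)=e^{-\frac{\bar\gamma}{\delta}t\mathcal{L}}V_0-\int_0^t e^{-\frac{\bar\gamma}{\delta}(t-s)\mathcal{L}}\big(\mathcal{N}(V,\nabla V)-\mathcal{G}(V)\big)(s)\,ds,
\]
and apply the $L^q_t W^{s,\infty}_x$ Strichartz estimate for the semigroup generated by $\mathcal{L}$. The relevant dispersive estimate for this rotating-acoustic operator in $\mathbb{R}^3$ (the nonzero eigenvalues of the symbol of $\mathcal{L}$ behave like $\sqrt{\nu^2\xi_3^2/|\xi|^2+|\xi|^2}$ up to the kernel) was established in \cite{Ngo-DCDS} for frequencies in the truncated cone $\mathcal{C}_{r,R}$; after a Littlewood--Paley decomposition one gains a factor $\delta^{1/q}$ from the time rescaling $t\mapsto t/\delta$, at the cost of a fixed number of derivatives (here the loss of three derivatives, $W^{m-3,\infty}$ against the $H^m$ control, and the low/high frequency truncation error, absorbed by the $H^m$ bound via Bernstein). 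Both the data term $e^{-\frac{\bar\gamma}{\delta}t\mathcal{L}}V_0$ and the Duhamel term are then $O(\delta^{1/q})$ in $L^q(0,T;W^{m-3,\infty}(\mathbb{R}^3))$ because the forcing $\mathcal{N}(V,\nabla V)-\mathcal{G}(V)$ is bounded in $L^\infty(0,T;H^{m-1}(\mathbb{R}^3))$ uniformly in $\delta$ by \eqref{12032}.

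I expect the main obstacle to be the interplay between the $\delta$-independence of the existence time $T$ and the perturbation terms $\mathcal{G}(V)$: one must verify carefully that commuting $\partial^\alpha$ through $\mathcal{G}$ produces only terms controlled by $\|(a,w)\|_{W^{m+1,\infty}(\mathbb{R}^2)}\|V\|_{H^m(\mathbb{R}^3)}$ (no stray $\delta^{-1}$, and no $H^{m+3}(\mathbb{R}^2)$-norm of $(a,w)$ beyond what Lemma \ref{12084} supplies uniformly), and that the $2$D nature of $(a,w)$ does not spoil the $3$D product estimates — here one uses that functions of $x_h$ alone lie in $L^\infty(\mathbb{R}^3)$ with the right norm, so all products $a\,\partial v$, $v\cdot\nabla a$, etc., are estimated by putting $(a,w)$ in $L^\infty$ and $V$ in $L^2$-based spaces. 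A secondary technical point is the frequency-truncation in the Strichartz step: the estimate of \cite{Ngo-DCDS} lives on $\mathcal{C}_{r,R}$, so the contributions from $|\xi|\le r$, $|\xi|\ge R$, $|\xi_h|\le r$ and $|\xi_3|\le r$ must be shown to vanish or be harmless — the very low and very high frequencies by choosing $r,R$ appropriately and using the uniform $H^m$ bound, and the degenerate directions $|\xi_h|\le r$ or $|\xi_3|\le r$ by noting these are close to the kernel of $\mathcal{L}$ (zero Coriolis--acoustic frequency) but, for the \emph{full $3$D} component $V$ with no purely $2$D piece in its data, still carry enough dispersion; this is exactly the mechanism by which the whole of $V$, not just part of it, disperses, as in \cite{Ngo-DCDS}.
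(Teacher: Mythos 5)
Your first part (uniform $H^m$ energy estimates exploiting the skew-symmetry of $\mathcal{L}$, Moser/commutator bounds for $\mathcal{N}$, control of the linear perturbation $\mathcal{G}(V)$ through $\|(a,w)\|_{W^{m+1,\infty}(\mathbb{R}^2)}$, Friedrichs mollification for existence, and an $L^2$ estimate for uniqueness) is essentially identical to the paper's argument in Section \ref{sec-well}, including the observation that functions of $x_h$ alone are placed in $L^\infty(\mathbb{R}^3)$ so that the $2$D coefficients do not disturb the $3$D product estimates. That half is fine.

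The dispersive half has a genuine gap. You propose to run Duhamel plus the Strichartz estimates of \cite{Ngo-DCDS}, which are only proved for frequencies localized in the truncated cone $\mathcal{C}_{r,R}=\{|\xi|\leq R,\ |\xi_h|\geq r,\ |\xi_3|\geq r\}$, and you dismiss the excluded regions by asserting that the degenerate directions $|\xi_h|\leq r$ or $|\xi_3|\leq r$ ``still carry enough dispersion.'' This is precisely where the argument fails: the dispersion of the rotating--acoustic symbol degenerates as $\xi_h\to 0$ or $\xi_3\to 0$, which is exactly why \cite{Ngo-DCDS} excise those regions, and their constants blow up as $r\to 0$ or $R\to\infty$. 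With $r,R$ fixed, the truncation errors are controlled only via the uniform $H^m$ bound and Bernstein, hence are small in $r$ and $R^{-1}$ but carry \emph{no} power of $\delta$; with $\delta$-dependent $r,R$ you must track the degeneration of the Strichartz constants, and the optimization does not yield the clean rate $\delta^{1/q}$ with only a three-derivative loss that \eqref{12061} asserts (at best it gives a qualitative vanishing or a strictly weaker rate, which is indeed all that \cite{Ngo-DCDS} obtain). The paper circumvents this entirely by using Lemma \ref{lem-Fujii} (Lemma 3.2 of \cite{B-Fujii}): a frequency-localized Strichartz estimate valid on \emph{every} dyadic block $k\in\mathbb{Z}$, with the uniform gain $\delta^{1/q}$ and explicit derivative losses $2^{3(\frac12-\frac1r-\frac1q)k}$ for $2^k\leq\delta/\varepsilon$ and $2^{3(\frac12-\frac1r)k}$ for $2^k>\delta/\varepsilon$ (here $\delta/\varepsilon$ is a fixed constant); summing in homogeneous Besov norms and bounding the data and the forcing $(\mathcal{N},\mathcal{G})$ in $L^1(0,T;H^{\ell+2})$ via \eqref{12032} then gives exactly \eqref{12061}. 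To repair your proposal you would need either to import such an all-frequency estimate or to supply the missing quantitative analysis of the degenerate frequency regions; as written, the claimed rate does not follow.
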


The proof of Theorem \ref{Exis-Mid} will be given in the following two subsections. Obviously, combining Theorem \ref{Exis-Mid}, Lemma \ref{12084} and the continuous embedding $H^{m+3}(\mathbb{R}^2)\hookrightarrow W^{m+1,\infty}(\mathbb{R}^2)$, we conclude Theorem \ref{th-ex} readily.

\subsection{Local well-posedness}\label{sec-well}
In this subsection, we prove the results of existence and uniform estimates stated in Theorem \ref{Exis-Mid}.
We begin from showing that \eqref{12032} holds for the solution to \eqref{12031}. Let $V\in C([0,T];H^{m}(\mathbb{R}^3))$ be the local strong solution to \eqref{12031}.
For any $\alpha \in ( \mathbb{N} \cup \{ 0\} )^3$ with $0\leq |\alpha|\leq m$, applying $D^{\alpha}$ to \eqref{12031},
taking the $L^2$ inner product of the resulting equations with $D^{\alpha}V$ and integrating by parts,
we get
\begin{align}
\frac{1}{2}\frac{d}{dt}\int_{\mathbb{R}^3}{|D^{\alpha}V|^2}dx=
&
-\int_{\mathbb{R}^3}{D^{\alpha}\mathcal{N}(V,\nabla V)\cdot D^{\alpha}V}dx
-
\int_{\mathbb{R}^3}{D^{\alpha}((w\cdot\nabla) V)\cdot D^{\alpha}V}dx
 \nonumber  \\
&-\bar{\gamma}\int_{\mathbb{R}^3}{
\Big(
D^{\alpha}(a\operatorname{div}v) D^{\alpha} \vartheta+D^{\alpha}(a\nabla \vartheta)\cdot D^{\alpha}v
\Big)
}dx  \nonumber \\
&
-\int_{\mathbb{R}^3}{
\Big(
D^{\alpha}(v\cdot\nabla a) D^{\alpha}\vartheta+D^{\alpha}((v\cdot\nabla) w)\cdot D^{\alpha}v
\Big)
} dx
 \nonumber   \\
&-\bar{\gamma}\int_{\mathbb{R}^3}{
\Big(
D^{\alpha}(\vartheta\cdot\operatorname{div}w) D^{\alpha}\vartheta+D^{\alpha}(\vartheta\cdot\nabla a)\cdot D^{\alpha}v
\Big)
}dx,  \label{12033}
\end{align}
where we have used that
\begin{align*}
    \int_{\mathbb{R}^3} D^{\alpha} \mathcal{L} V \cdot D^{\alpha}  V dx=0.
\end{align*}  
Notice that $\mathcal{N}(V,\nabla V)$ is the nonlinear part of the $3$D compressible Euler equations. Thus, by using the classical estimates for symmetric hyperbolic system as in \cite{M-book} we obtain
\begin{equation}\label{12036}
\int_{\mathbb{R}^3}{D^{\alpha}\mathcal{N}(V,\nabla V)\cdot D^{\alpha}V}dx\leq C
\|V\|_{H^{m}(\mathbb{R}^3)}^3   .
\end{equation}
Then, integrating by parts gives
\begin{equation}\label{12037}
\begin{split}
&\int_{\mathbb{R}^3}{D^{\alpha}(w\cdot\nabla V)\cdot D^{\alpha}V}dx\\
&=\sum\limits_{\substack{\alpha_1+\alpha_2=\alpha\\|\alpha_1|\geq1}}C(\alpha_1,\alpha_2)
\int_{\mathbb{R}^3}{(D^{\alpha_1}w\cdot D^{\alpha_2}\nabla) V\cdot D^{\alpha}V}dx-\frac{1}{2}
\int_{\mathbb{R}^3}{\operatorname{div}_h w_h |D^{\alpha}V|^2}dx\\
&\leq C\|w\|_{W^{m,\infty}(\mathbb{R}^2)}\|V\|_{H^m(\mathbb{R}^3)}^2.
\end{split}
\end{equation}
In a similar manner,
\begin{equation}\label{12038}
\begin{split}
&\bar{\gamma}
\int_{\mathbb{R}^3}{
\Big(
D^{\alpha}(a\operatorname{div}v) D^{\alpha} \vartheta+D^{\alpha}(a \nabla \vartheta)\cdot D^{\alpha}v
\Big)
}dx\\
&=\bar{\gamma}\sum\limits_{\substack{\alpha_1+\alpha_2=\alpha\\|\alpha_1|\geq1}}C(\alpha_1,\alpha_2)
\int_{\mathbb{R}^3}{
\Big(
D^{\alpha_1}a D^{\alpha_2}\operatorname{div}v D^{\alpha}\vartheta+D^{\alpha_1}a D^{\alpha_2}\nabla \vartheta\cdot D^{\alpha}v
\Big)
}dx\\
&~~~~-\bar{\gamma}\int_{\mathbb{R}^3}{(\nabla_h a\cdot D^{\alpha}v_h) D^{\alpha}\vartheta}dx\\
&\leq C\|a\|_{W^{m,\infty}(\mathbb{R}^2)}\|V\|_{H^m(\mathbb{R}^3)}^2.
\end{split}
\end{equation}
Finally, direct calculations show that the last two integrals in \eqref{12033} are bounded by
\begin{equation}\label{12039}
C\|(\nabla a,\nabla w)\|_{W^{m,\infty}(\mathbb{R}^2)}\|V\|^2_{H^m(\mathbb{R}^3)}.
\end{equation}
Combining the estimates at hand and applying Gronwall's inequality we obtain \eqref{12032} for some $T>0$ suitably small but is independent of $\delta$.

Now we prove the existence part of Theorem \ref{Exis-Mid}. Indeed, equation \eqref{12031} can be approximated by
\begin{equation}\label{12034}
\partial_t V^{\beta}+\frac{\bar{\gamma}}{\delta}\mathcal{L} J_{\beta}V^{\beta}+J_{\beta}\mathcal{N}(J_{\beta}V^{\beta},\nabla J_{\beta} V^{\beta})=J_{\beta}\mathcal{G}(J_{\beta}V^{\beta}),~~~V^{\beta}|_{t=0}=J_{\beta}V_0,
\end{equation}
where $J_{\beta}$ is a standard symmetric mollifier that tends to the identity operator as $\beta\to0$. Existence of solutions to the system \eqref{12034} on some time interval $[0,T_{\delta,\beta}]$ for fixed positive $\delta$ and $\beta$ follows from the existence theorem for ODEs on Banach spaces, and \eqref{12034} is equivalent to
\begin{equation}\label{12035}
\partial_t V^{\beta}+\frac{\bar{\gamma}}{\delta}\mathcal{L} V^{\beta}+J_{\beta}\mathcal{N}(V^{\beta},\nabla  V^{\beta})=J_{\beta}\mathcal{G}(V^{\beta}),~~~V^{\beta}|_{t=0}=J_{\beta}V_0,
\end{equation}
by observing that $V^{\beta}=J_{\beta}V^{\beta}$. Now, for $0\leq |\alpha|\leq m$, applying $D^{\alpha}$ to \eqref{12035}, taking the $L^2$ inner product of the resulting equations with $D^{\alpha}V^{\beta}$, using similar arguments of \eqref{12036}-\eqref{12039} and applying Gronwall's inequality, we finally obtain
\begin{equation}\label{120310}
\sup\limits_{t\in[0,T]}\|V^{\beta}(t)\|_{H^{m}(\mathbb{R}^3)}\leq C\big(\|(b_0,u_0)\|_{H^{m}(\mathbb{R}^3)},\|(a,w)\|_{L^{\infty}(0,T^*;W^{m+1,\infty}(\mathbb{R}^2))},T\big)
\end{equation}
for some time $T\in(0,T^*]$ which is independent of $\delta$ and $\beta$. Meanwhile, it follows from \eqref{12035} and \eqref{120310} that
\begin{equation}\label{12041}
\sup\limits_{t\in[0,T]}\|\partial_tV^{\beta}(t)\|_{L^{2}(\mathbb{R}^3)}\leq C(\|(b_0,u_0)\|_{H^{m}(\mathbb{R}^3)},\|(a,w)\|_{L^{\infty}(0,T^*;W^{m+1,\infty}(\mathbb{R}^2))},T,\delta).
\end{equation}
Thus, combining \eqref{120310} and \eqref{12041} we get the strong convergence of the sequence $\{V^{\beta}\}_{\beta}$ as $\beta\to0$ (up to a subsequence). Passing to the limit $\beta\to0$ yields a strong solution to the equation \eqref{12031}. Based on the energy estimates, uniqueness of solution is verified similarly and the details are omitted.

\subsection{Strichartz estimates}
In this section, we establish the Strichartz decay estimates for the solution to \eqref{12031}. To this end, let $V\in C([0,T];H^{m}(\mathbb{R}^3))$ be the unique solution to \eqref{12031} constructed in Section \ref{sec-well}.

Let $\varphi\in \mathcal{S}(\mathbb{R}^{3})$ be a radial function, supported in the annulus $\mathcal{C}:=\{\xi\in\mathbb{R}^{3}:\frac{3}{4}\leq |\xi|\leq \frac{8}{3}\}$ such that
\begin{equation*}
\sum\limits_{k\in\mathbb{Z}}\varphi_{k}(\xi)=1,~~\forall\,\xi\neq0,
\end{equation*}
where $\varphi_{k}(\xi)=\varphi(2^{-k}\xi)$. We define the operators
\begin{equation}\label{235}
\begin{split}
&\Delta_{k}f(x):=\varphi_{k}(D)f(x),~~k\in\mathbb{Z}.
\end{split}
\end{equation}
For $(r,\sigma)\in[1,\infty]^{2}$, $m\in\mathbb{R}$ and $q\in[1,\infty]$, we define the semi-norm of the standard homogeneous Besov spaces $\dot{B}^{m}_{r,\sigma}(\mathbb{R}^{3})$:
\begin{equation*}
\|f\|_{\dot{B}^{m}_{r,\sigma}(\mathbb{R}^{3})}:=\bigg(\sum\limits_{k\in\mathbb{Z}}\big(2^{mk}\|\Delta_{k}f\|_{L^{r}(\mathbb{R}^{3})}\big)^{\sigma}\bigg)^{\frac{1}{\sigma}},
\end{equation*}
and the space-time norm of Chemin--Lerner type:
\begin{equation}\label{eq:besovspacetime}
\|f\|_{\widetilde{L^{q}}(0,T;\dot{B}^{m}_{r,\sigma}(\mathbb{R}^{3}))}:=\bigg(\sum\limits_{k\in\mathbb{Z}}\big(2^{mk}\|\Delta_{k}f\|_{L^{q}(0,T;L^{r}(\mathbb{R}^{3}))}\big)^{\sigma}\bigg)^{\frac{1}{\sigma}}.
\end{equation}
We have
\begin{equation}\label{892}
\begin{split}
&\|f\|_{\widetilde{L^{q}}(0,T;\dot{B}^{m}_{r,\sigma}(\mathbb{R}^{3}))}\leq \|f\|_{L^{q}(0,T;\dot{B}^{m}_{r,\sigma}(\mathbb{R}^{3}))}~\mathrm{if}~q\leq\sigma,
\\
&\|f\|_{L^{q}(0,T;\dot{B}^{m}_{r,\sigma}(\mathbb{R}^{3}))}\leq \|f\|_{\widetilde{L^{q}}(0,T;\dot{B}^{m}_{r,\sigma}(\mathbb{R}^{3}))}~\mathrm{if}~\sigma\leq q.
\end{split}
\end{equation}
We refer to the monograph \cite{BB50} for more details about Besov spaces.

To derive the Strichartz-type decay estimate of solution to \eqref{12031}, we give the following lemma, which can be justified by  Lemma 3.2 in \cite{B-Fujii}.

\begin{lemma}\label{lem-Fujii}
Assume that $q,r$ satisfy
\begin{equation}\label{12051}
2\leq q,r\leq\infty,~~\frac{1}{q}+\frac{1}{r}\leq\frac{1}{2},~~(q,r)\neq(2,\infty).
\end{equation}
Then there exists a positive constant $C=C(q,r,\nu)$ such that
\begin{equation*}
\|\Delta_k V\|_{L^q(0,T;L^r(\mathbb{R}^3))}\leq
C 2^{3(\frac{1}{2}-\frac{1}{r}-\frac{1}{q})k}\delta^{\frac{1}{q}}\Big(\|\Delta_k V_0\|_{L^2(\mathbb{R}^3)}+\|\Delta_k(\mathcal{N}(V,\nabla V),\mathcal{G}(V))\|_{L^{1}(0,T;L^{2}(\mathbb{R}^3))}\Big)
\end{equation*}
holds for all $k\in\mathbb{Z}$ with $2^k\leq \delta/\varepsilon$, and
\begin{equation*}
\|\Delta_k V\|_{L^q(0,T;L^r(\mathbb{R}^3))}\leq
C 2^{3(\frac{1}{2}-\frac{1}{r})k}\delta^{\frac{1}{q}}\Big(\|\Delta_k V_0\|_{L^2(\mathbb{R}^3)}+\|\Delta_k(\mathcal{N}(V,\nabla V),\mathcal{G}(V))\|_{L^{1}(0,T;L^{2}(\mathbb{R}^3))}\Big)
\end{equation*}
holds for all $k\in\mathbb{Z}$ with $2^k> \delta/\varepsilon$.
\end{lemma}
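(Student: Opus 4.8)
The plan is to reduce the Strichartz estimate for the full nonlinear problem \eqref{12031} to a linear dispersive estimate for the acoustic-Coriolis operator $\mathcal{L}$ via Duhamel's formula, and then invoke the already-cited Lemma 3.2 of \cite{B-Fujii}. First I would write the equation satisfied by the Littlewood--Paley block $\Delta_k V$: since $\Delta_k$ commutes with $\mathcal{L}$ and $\partial_t$, applying $\Delta_k$ to \eqref{12031} gives
\begin{equation*}
\partial_t (\Delta_k V)+\frac{\bar\gamma}{\delta}\mathcal{L}(\Delta_k V)=-\Delta_k\mathcal{N}(V,\nabla V)+\Delta_k\mathcal{G}(V),\qquad \Delta_k V|_{t=0}=\Delta_k V_0,
\end{equation*}
so that Duhamel's formula represents $\Delta_k V(t)$ as the free propagator $e^{-\frac{\bar\gamma}{\delta}t\mathcal{L}}$ applied to $\Delta_k V_0$ plus a time integral of the same propagator applied to the source term $\Delta_k(-\mathcal{N}(V,\nabla V)+\mathcal{G}(V))$, localized in the same annulus $2^{k-1}\lesssim|\xi|\lesssim 2^{k+1}$.

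Next I would recall (or restate) the structure of the dispersion relation for $\mathcal{L}$. Diagonalizing the $4\times4$ symbol of $\mathcal{L}$ on the frequency side, one finds the eigenvalues $0$ (twice, the incompressible/geostrophic modes) and $\pm i\sqrt{|\xi|^2+\nu^2\xi_3^2/|\xi|^2}$ (or the appropriate Klein--Gordon-type symbol coming from the combined pressure and Coriolis terms); the non-zero branches are precisely the ones that disperse. The key stationary-phase input — exactly what Lemma 3.2 of \cite{B-Fujii} packages — is a fixed-time $L^1\to L^\infty$ decay of order $(\delta/t)^{\text{something}}$ for the frequency-localized propagator, or more precisely the $TT^*$-type bound yielding, after interpolation with the trivial $L^2$ bound and the usual Keel--Tao machinery, the admissible-pair estimate with the $\delta^{1/q}$ gain and the two different frequency-scaling exponents $2^{3(\frac12-\frac1r-\frac1q)k}$ and $2^{3(\frac12-\frac1r)k}$ according to whether $2^k\le\delta/\varepsilon$ or $2^k>\delta/\varepsilon$. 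The two regimes arise because the curvature of the characteristic surface degenerates once the frequency is large compared to the Rossby parameter $\delta/\varepsilon=\nu\bar\gamma^{-1}$; below that threshold one gets the full dispersion of a genuinely $3$D wave, above it one only gets the ``slab'' decay.

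Then I would apply Lemma 3.2 of \cite{B-Fujii} to each of the two source pieces: to $e^{-\frac{\bar\gamma}{\delta}t\mathcal{L}}\Delta_k V_0$ it directly produces the term $\|\Delta_k V_0\|_{L^2}$, and to the Duhamel integral the Christ--Kiselev lemma (or the standard inhomogeneous Strichartz estimate, valid since $(q,r)$ is admissible in the sense of \eqref{12051} and $(1,2)$ is trivially admissible as the dual endpoint) converts the time integral into the $L^1_t L^2_x$ norm of the source, giving $\|\Delta_k(\mathcal{N}(V,\nabla V),\mathcal{G}(V))\|_{L^1(0,T;L^2)}$. Collecting the two pieces and carrying along the prefactors gives exactly the two claimed inequalities. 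I would be careful that the frequency-threshold is stated as $2^k\lessgtr\delta/\varepsilon$ and that $\bar\gamma/\delta$ is the correct coefficient in front of $\mathcal{L}$ in \eqref{12031}, matching the time-rescaling convention under which \cite{B-Fujii} is stated.

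The main obstacle — really the only nontrivial point, and the reason the statement is phrased as ``can be justified by Lemma 3.2 in \cite{B-Fujii}'' rather than proved from scratch — is checking that the abstract hypotheses of that lemma apply verbatim to our operator $\mathcal{L}$ with the rescaled time and the parameters $(\delta,\varepsilon,\nu)$ as arranged here: namely that the non-degenerate eigenvalue branches of $\mathcal{L}$ have the same symbol class and the same Hessian non-degeneracy (away from the frequency threshold) as the reference operator in \cite{B-Fujii}, that the zero-eigenvalue (geostrophic) component is harmless because Lemma 3.2 is applied only after the spectral projection onto the oscillating part and the geostrophic part of $V$ contributes to neither side in a way that breaks the estimate, and that the passage from the fixed-$\delta$ smooth solution (where everything is classical) to the estimate uniform in $\delta$ is legitimate. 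Once this bookkeeping is done, the rest is the by-now-standard Strichartz/Duhamel argument sketched above.
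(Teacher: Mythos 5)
Your overall route is the same as the paper's: the paper offers no proof of this lemma beyond the citation, and your reduction (apply $\Delta_k$ to \eqref{12031}, write Duhamel, feed the homogeneous and inhomogeneous pieces into the frequency-localized linear Strichartz estimates of Lemma 3.2 in \cite{B-Fujii}, with the $L^1_tL^2_x$ dual pair for the source) is exactly the intended bookkeeping. At that level the proposal is fine; Christ--Kiselev is not even needed for the $L^1_tL^2_x$ source norm, Minkowski plus the homogeneous bound suffices.

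However, the step you yourself single out as ``the main obstacle'' --- verifying that the spectral structure of $\mathcal{L}$ matches the hypotheses of the cited lemma --- is carried out with the wrong spectral picture, and as written that verification would not go through. The $4\times4$ symbol $\widehat{\mathcal{L}}(\xi)$ does \emph{not} have a double zero eigenvalue plus a single oscillating pair: its characteristic equation is $\lambda^4-(\nu^2+|\xi|^2)\lambda^2+\nu^2\xi_3^2=0$, so for every $\xi$ with $\xi_3\neq0$ all four eigenvalues $\pm i\lambda_{\pm}(\xi)$, $\lambda_{\pm}^2=\tfrac12\bigl(\nu^2+|\xi|^2\pm\sqrt{(\nu^2+|\xi|^2)^2-4\nu^2\xi_3^2}\bigr)$, are nonzero and the kernel is trivial; the geostrophic kernel lives only on the null set $\{\xi_3=0\}$ (this is precisely why, as recalled in the introduction, the only finite-energy element of $\ker\mathcal{L}$ in $\mathbb{R}^3$ is zero). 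The structure you describe --- eigenvalue $0$ of multiplicity two plus $\pm i\sqrt{|\xi|^2+\nu^2\xi_3^2/|\xi|^2}$, with a ``geostrophic projection'' to be discarded --- is essentially that of the $2$D operator $\mathcal{A}$ of Section \ref{sec:QG}, not of the $3$D operator $\mathcal{L}$, and the quoted symbol is not even one of the true branches. Consequently the real delicate points go unaddressed: there are two distinct oscillating branches, the slow branch $\lambda_-\sim\nu|\xi_3|/|\xi|$ degenerates as $\xi_3\to0$ and at high frequency (this degeneracy, handled in \cite{B-Fujii} without the frequency truncation $|\xi_h|,|\xi_3|\geq r$ of Ngo--Scrobogna, is what produces the weaker high-frequency factor $2^{3(\frac12-\frac1r)k}$ versus $2^{3(\frac12-\frac1r-\frac1q)k}$), and the threshold constant is $\delta/\varepsilon=\bar{\gamma}\nu$ by \eqref{8202}, not $\nu\bar{\gamma}^{-1}$. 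None of this changes the top-level strategy, but the hypothesis-checking you flag as the substantive content needs to be redone with the correct diagonalization.
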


Let $q,r$ satisfy the conditions in \eqref{12051}. Then from \eqref{892} and Lemma \ref{lem-Fujii}  we get that, for $\ell\in
 ( \mathbb{N} \cup \{ 0\} )  \cap[0,m-3]$,
\begin{equation*}
\begin{split}
&\|\nabla^{\ell} V\|_{L^q(0,T;L^{\infty}(\mathbb{R}^3))}
\leq
C
\|V\|_{L^q(0,T;\dot{B}^{\ell+\frac{3}{r}}_{r,1}(\mathbb{R}^3))}\leq C\|V\|_{\widetilde{L^q}(0,T;\dot{B}^{\ell+\frac{3}{r}}_{r,1}(\mathbb{R}^3))}\\
&~~~
\leq
C
\delta^{\frac{1}{q}}\sum\limits_{k\leq\log_{2}{\delta/\varepsilon}}2^{k(\ell +\frac{3}{2}-\frac{3}{q})}\Big(\|\Delta_k V_0\|_{L^2(\mathbb{R}^3)}+\|\Delta_k(\mathcal{N}(V,\nabla V),\mathcal{G}(V))\|_{L^{1}(0,T;L^{2}(\mathbb{R}^3))}\Big)\\
&~~~~~~
+C\delta^{\frac{1}{q}}\sum\limits_{k>\log_{2}{\delta/\varepsilon}}2^{k(\ell+\frac{3}{2})}\Big(\|\Delta_k V_0\|_{L^2(\mathbb{R}^3)}+\|\Delta_k(\mathcal{N}(V,\nabla V),\mathcal{G}(V))\|_{L^{1}(0,T;L^{2}(\mathbb{R}^3))}\Big)\\
&~~~
\leq
C
\delta^{\frac{1}{q}}\Big(\|V_0\|_{\dot{B}^{\ell+\frac{3}{2}-\frac{3}{q}}_{2,1}\cap \dot{B}^{\ell+\frac{3}{2}}_{2,1}(\mathbb{R}^3)}+\|(\mathcal{N}(V,\nabla V),\mathcal{G}(V))\|_{L^1(0,T;\dot{B}^{\ell+\frac{3}{2}-\frac{3}{q}}_{2,1}\cap\dot{B}^{\ell+\frac{3}{2}}_{2,1} (\mathbb{R}^3))}\Big)\\
&~~~
\leq C\delta^{\frac{1}{q}}\Big(\|V_0\|_{H^{\ell+2}(\mathbb{R}^3)}+\|(\mathcal{N}(V,\nabla V),\mathcal{G}(V))\|_{L^1(0,T;H^{\ell+2}(\mathbb{R}^3))} \Big)\\
&~~~
\leq C\delta^{\frac{1}{q}}
\Big(
\|V_0\|_{H^{\ell+2}(\mathbb{R}^3)}+\|V\|^2_{L^2(0,T;H^{\ell+3}(\mathbb{R}^3))}   \\
&~~~~~~ \qquad \qquad
+\|(a,w)\|_{L^\infty(0,T;W^{\ell+3,\infty}(\mathbb{R}^2))}\|V\|_{L^1(0,T;H^{\ell+3}(\mathbb{R}^3))}
\Big),
\end{split}
\end{equation*}
which implies \eqref{12061}. We have finished the proof of Theorem \ref{Exis-Mid}.

\section{QG limit of 2D compressible rotating Euler equations}\label{sec:QG}
In this section,
we investigate the asymptotic limit of the intermediate system \eqref{11183}.
The key ingredient is to study the QG limit of the $2$D compressible rotating Euler equations.
The discussions are restricted in the $2$D horizontal plane. To distinguish the notations, in this section, we use $y=(y_1,y_2)\in\mathbb{R}^2$ to denote the spatial variables in horizontal directions, and use $\eta=(\eta_1,\eta_2)\in\mathbb{R}^2$ to denote the corresponding frequency variables.
For $f=f(y)$, the notations $\mathcal{F}(f)(\eta)$ and $\widehat{f}(\eta)$ still denote its Fourier transform, and $\mathcal{F}^{-1}$ is the inverse of $\mathcal{F}$. For $g=g(\eta)$, we define the operator
\begin{equation*}
g(D_h)f(y):=\mathcal{F}^{-1}(g\mathcal{F}(f))(y).
\end{equation*}

Let $W:=(a,w_h)^{\top}$, then we may rewrite \eqref{11183}$_{1,2}$ as
\begin{equation}\label{12054}
\partial_t W+\frac{\bar{\gamma}}{\delta}\mathcal{A}W+\widetilde{\mathcal{N}}(W,\nabla_h W)=0,~~W|_{t=0}=W_0:=(b_0^L,u_{h,0}^L)^{\top},
\end{equation}
where
\begin{equation}\label{12055}
\mathcal{A}W=
\begin{pmatrix}
\operatorname{div}_h w_h\\
\nu w_h^{\bot}+\nabla_h a
\end{pmatrix}
,~~\mathrm{and}~~
\widetilde{\mathcal{N}}(W,\nabla_h W)
=
\begin{pmatrix}
w_h\cdot\nabla_h a+\bar{\gamma}a\operatorname{div}_h w_h\\
(w_h\cdot\nabla_h) w_h+\bar{\gamma}a\nabla_h a
\end{pmatrix}
.
\end{equation}
Taking the Fourier transform of the large operator $\mathcal{A}$ gives
\begin{equation*}
\widehat{\mathcal{A}}(\eta)=\left(\begin{matrix}
0&i\eta_{1}&i\eta_{2}\\
i\eta_{1}&0&-\nu\\
i\eta_{2}&\nu&0\\
\end{matrix}\right).
\end{equation*}
By solving the eigenvalue problems
\begin{equation*}
\widehat{\mathcal{A}}(\eta)\mathbf{d}^{}_{j}(\eta)=i p_{j}(\eta)\mathbf{d}_{j}(\eta),~~j\in\{0,+,-\},
\end{equation*}
we obtain
\begin{equation}\label{7252}
 p_0(\eta)=0,~~\mathrm{and}~~p_{\pm}(\eta)=\pm p(\eta),~~\mathrm{where}~~p(\eta)=\sqrt{\nu^2+|\eta|^2}.
\end{equation}
The exact expressions of the eigenvectors $\mathbf{d}_{j}(\eta)$ are not important for our purpose. However, we may choose in a way such that for any $\eta\in\mathbb{R}^2$, $\{\mathbf{d}_{j}(\eta)\}_{j=0,\pm}$ form an orthonormal basis in $\mathbb{C}^{3}$.
Now we define the projection operators
\begin{equation}\label{274}
\mathcal{P}_{(j)}f(y)=\mathcal{F}^{-1}\big((\widehat{f}\cdot \overline{\mathbf{d}_{j}})\mathbf{d}_{j}\big)(y),~~j\in\{0,+,-\}.
\end{equation}
Then, for any $f \in (L^2(\mathbb{R}^2))^3$, we see that $f=\sum\nolimits_{j=0,\pm}\mathcal{P}_{(j)}f$.
Applying $\mathcal{P}_{(j)}$ to \eqref{12054} and using Duhamel's principle yields, for $j\in\{0,+,-\}$,
\begin{equation}\label{12071}
\mathcal{P}_{(j)}W(t)=e^{-i\frac{\bar{\gamma}t}{\delta} p_{j}(D_h)}\mathcal{P}_{(j)}W_{0}-\int_{0}^{t}{e^{-i\bar{\gamma}\frac{t-s}{\delta} p_{j}(D_h)}\mathcal{P}_{(j)}\widetilde{\mathcal{N}}(W,\nabla_h W)(s)}ds.
\end{equation}
Using the above projections, we decompose $W=W^S+W^F$ with
\begin{equation}\label{12073}
W^S=
\begin{pmatrix}
a^S\\w_h^S
\end{pmatrix}
:=
\mathcal{P}_{(0)}W,~~\mathrm{and}~~
W^F=
\begin{pmatrix}
a^F\\w_h^F
\end{pmatrix}
:=
\mathcal{P}_{(-)}W+\mathcal{P}_{(+)}W.
\end{equation}
It is clear that $W^F$ and $W^S$ are respectively the so-called fast oscillating wave and the slow wave in the system \eqref{12054}. Direct calculations show
\begin{equation}\label{12074}
\nu w_h^{S,\bot}+\nabla_h a^S=0,~~\operatorname{div}_h w_h^S=0,~~\mathrm{and}~~\mathrm{curl}_h w_h^F-\nu a^F=0.
\end{equation}
Indeed, if $\mathbf{z}=(z_1,z_2,z_3)^{\top}$ such that $\widehat{\mathcal{A}}(\eta)\mathbf{z}=\lambda \mathbf{z}$ with $\lambda\neq0$, then we can check directly that
\begin{equation*}
i\eta_1 z_3-i\eta_2 z_2-\nu z_1=0.
\end{equation*}
Thus the property of $(a^F,w_h^F)$ in \eqref{12074} holds. The other one is verified similarly. %%%%%%%%%%%%%%%%

The main result of this section is stated as follows, of which the proofs are presented in the following two subsections.

\begin{theorem}\label{th-2d}
Assume that $(b_0^L,u_{h,0}^L,u^L_{3,0})\in H^{m+3}(\mathbb{R}^2)$ with $m\geq 3$. Let $(W,w_3)=(a,w_h,w_3)\in C([0,T^*];H^{m+3}(\mathbb{R}^2))$ be the solution to \eqref{11183} ensured by Lemma \ref{12084}, and $a^S$, $w^S_h$, $W^F$ be defined in \eqref{12073}. Then for any $q\in(2,\infty)$,
\begin{equation}
\begin{split}\label{12091}
&\|a^S-b^L\|_{L^{\infty}(0,T^*;H^{m}(\mathbb{R}^2))}+\|(w^S_h-u^L_h,w_3-u^L_3)\|_{L^{\infty}(0,T^*;H^{m-1}(\mathbb{R}^2))}\leq C\delta^{\frac{1}{q}},\\
&
%\mathrm{and}~~
\|W^F\|_{L^q(0,T^*;W^{m-1,\infty}(\mathbb{R}^2))}\leq C\delta^{\frac{1}{q}},
\end{split}
\end{equation}
where $(b^L,u^L_h, u^L_3)$ is the solution to the target system \eqref{Lim-sys}, and the positive constants $C$ are independent of $\delta$.
\end{theorem}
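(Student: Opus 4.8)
The plan is to split the argument into the two estimates in \eqref{12091}, treating the fast oscillating part first and then feeding the resulting decay into an energy estimate for the slow part. For the bound on $W^F = \mathcal{P}_{(+)}W + \mathcal{P}_{(-)}W$, I would start from the Duhamel formula \eqref{12071} with $j=\pm$. The linear propagator $e^{-i\frac{\bar\gamma t}{\delta}p_\pm(D_h)}$ is a dispersive group whose symbol $p(\eta)=\sqrt{\nu^2+|\eta|^2}$ is the symbol of (a lift of) the Klein--Gordon / wave operator; away from $\eta=0$ its Hessian is nondegenerate, so on frequency annuli it obeys a stationary-phase $L^1\to L^\infty$ dispersive estimate with a gain of $(\delta/|t|)$ up to a suitable power, localized at each dyadic scale $2^k$. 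Interpolating with the trivial $L^2$ bound and applying the Keel--Tao $TT^*$ machinery gives a Strichartz estimate: for admissible $(q,r)$ (here one only needs $r=\infty$ in a Besov-refined form, exactly as in Lemma \ref{lem-Fujii} but on $\mathbb{R}^2$), one gets
\begin{equation*}
\|\Delta_k W^F\|_{L^q(0,T^*;L^\infty(\mathbb{R}^2))} \le C\,2^{\theta k}\,\delta^{\frac{1}{q}}\Big(\|\Delta_k W_0\|_{L^2} + \|\Delta_k \widetilde{\mathcal{N}}(W,\nabla_h W)\|_{L^1(0,T^*;L^2)}\Big)
\end{equation*}
for an appropriate $\theta$. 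Summing over $k$ with the weight $2^{(m-1)k}$, using the uniform bound \eqref{12092} from Lemma \ref{12084} to control $W$ and $\widetilde{\mathcal{N}}(W,\nabla_h W)$ in the requisite high-regularity Sobolev/Besov norms (here $m+3$ derivatives on the data is more than enough to absorb the loss $\theta$ and the quadratic nonlinearity via product estimates), and invoking \eqref{892} to pass between Chemin--Lerner and ordinary $L^q_t$ norms, yields $\|W^F\|_{L^q(0,T^*;W^{m-1,\infty}(\mathbb{R}^2))}\le C\delta^{1/q}$. The one point requiring care is the frequency truncation near $\eta=0$: since the data and nonlinearity live in inhomogeneous Sobolev spaces, the very-low-frequency block must be handled separately (e.g. bounded directly in $L^\infty$ by Bernstein without any decay, then noting it is $O(\delta^{1/q})$ after integrating a fixed finite quantity over $[0,T^*]$ — or, following the strategy of the $3$D argument, restricted to $2^k \lesssim \delta/\varepsilon$), and the bookkeeping there mirrors the computation already carried out in Section \ref{sec2}.

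For the slow part, the idea is that $W^S=\mathcal{P}_{(0)}W$ satisfies, by \eqref{12074}, the structural constraints $\nu w_h^{S,\bot}+\nabla_h a^S=0$ and $\operatorname{div}_h w_h^S=0$, so $W^S$ is determined by a single scalar (a streamfunction), exactly as the target $(b^L,u_h^L)$ is. Applying $\mathcal{P}_{(0)}$ to \eqref{12054}, the linear term drops ($p_0=0$), leaving $\partial_t W^S + \mathcal{P}_{(0)}\widetilde{\mathcal{N}}(W,\nabla_h W)=0$. Substituting $W=W^S+W^F$ and comparing with the equation satisfied by the target variables, the difference $(a^S-b^L, w_h^S-u_h^L)$ solves a $2$D QG-type equation (a transport equation for the potential vorticity $\operatorname{curl}_h w_h^S - \nu a^S$, after applying $\operatorname{curl}_h$ and using the constraints) with source terms that are either (i) quadratic in $W^F$, or (ii) bilinear in $W^F$ and $W^S$, or (iii) bilinear in the difference itself and the smooth given flow. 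I would run a standard $H^{m-1}$ energy estimate (one derivative below the available $H^{m+3}$, so all commutator and product terms close by the Moser-type inequalities and the uniform bounds of Lemma \ref{12084}), picking up a Gronwall factor; the forcing terms of types (i)--(ii) are controlled in $L^1(0,T^*;H^{m-1})$ by $\|W^F\|_{L^2(0,T^*;W^{m-1,\infty})}\cdot\|W\|_{L^\infty(0,T^*;H^{m-1})}\le C\delta^{1/q}$ using the part already proved (note $q>2$ so $\delta^{1/q}$ is the governing power and $L^q\hookrightarrow L^2$ on the bounded interval $[0,T^*]$), while type (iii) is absorbed by Gronwall. The $w_3$ equation is even simpler: $\partial_t w_3 + w_h\cdot\nabla_h w_3=0$ versus $\partial_t u_3^L + u_h^L\cdot\nabla_h u_3^L=0$ with the same initial data, so the difference $w_3-u_3^L$ solves a transport equation whose drift $w_h$ and whose source $(w_h-u_h^L)\cdot\nabla_h u_3^L = (w_h^S-u_h^L + w_h^F)\cdot\nabla_h u_3^L$ is again $O(\delta^{1/q})$ in $L^1(0,T^*;H^{m-1})$, giving the claimed bound after Gronwall.

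I expect the main obstacle to be the Strichartz estimate on $\mathbb{R}^2$ at the low-frequency end. Unlike the genuine wave/Klein--Gordon dispersion in higher dimensions, in two space dimensions the decay exponents are borderline and the symbol $p(\eta)=\sqrt{\nu^2+|\eta|^2}$ is smooth but becomes flat as $|\eta|\to\infty$ (second derivatives decaying) and degenerate in curvature behavior near $\eta=0$ (where it looks like $\nu + |\eta|^2/(2\nu)$, a paraboloid — actually good curvature — but the group velocity vanishes); reconciling these two regimes into a single clean dispersive bound with the $\delta^{1/q}$ gain, uniformly down to the very lowest frequencies that carry the bulk of the inhomogeneous-Sobolev mass, is the delicate part. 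Fortunately Lemma \ref{lem-Fujii} and the $3$D computation in Section \ref{sec2} supply a template: the same frequency cutoff at $2^k \sim \delta/\varepsilon = \nu\bar\gamma$ (a fixed constant, by \eqref{8202}!) splits the analysis, and on the low-frequency side the Besov sum is finite and the $\delta^{1/q}$ prefactor is extracted exactly as there. Everything else — the energy estimates for the slow part, the product and commutator estimates, the Gronwall closure — is routine given the high regularity margin of three derivatives built into the hypothesis $(b_0^L,u_{h,0}^L,u_{3,0}^L)\in H^{m+3}$.
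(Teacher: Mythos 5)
Your overall strategy coincides with the paper's: a dyadic stationary-phase dispersive estimate for $e^{\pm i\frac{\bar{\gamma}t}{\delta}p(D_h)}$ plus a $TT^*$ argument to get $\|W^F\|_{L^q(0,T^*;W^{m-1,\infty})}\lesssim\delta^{1/q}$, then a transport/energy estimate with Gronwall for the slow part and for $w_3-u_3^L$. However, your handling of the low-frequency blocks is off on both counts you offer. The fallback of bounding the very-low-frequency piece by Bernstein ``without any decay'' and declaring it $O(\delta^{1/q})$ after integrating in time fails: an $O(1)$ bound in $L^\infty_t L^\infty_x$ only yields $O((T^*)^{1/q})$ in $L^q(0,T^*)$, with no power of $\delta$. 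And no separate treatment is needed: near $\eta=0$ the phase $p(\eta)=\sqrt{\nu^2+|\eta|^2}$ has a nondegenerate Hessian (Schr\"odinger-like; the vanishing group velocity is irrelevant, only the Hessian enters the $L^1\to L^\infty$ bound), so the dispersive estimate holds for every $k\in\mathbb{Z}$ — in the paper's Proposition \ref{pro-dis} the low frequencies $2^k\le\nu$ are the \emph{good} regime ($\mathcal{M}_k=1$), while the loss $\mathcal{M}_k=2^{3k}$ occurs at high frequencies where the symbol flattens, and that loss is exactly what the three extra derivatives in $H^{m+3}$ absorb. There is no truncation at $2^k\sim\delta/\varepsilon$ in the 2D argument; the threshold $2^k\sim\nu$ merely switches the constant.

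For the slow part there is a genuine gap in the regularity bookkeeping. A plain $H^{m-1}$ energy estimate on $(a^S-b^L,\,w_h^S-u_h^L)$ cannot produce the asserted $H^{m}$ bound on $a^S-b^L$; and if you instead mean an $H^{m-1}$ estimate on the potential-vorticity difference, the source terms would require $W^F$ in $W^{m,\infty}$ with the $\delta^{1/q}$ gain, which the Strichartz step does not provide (the nonlinearity only lies in $H^{m+2}$, so the admissible range is $\ell\le m-1$). The paper's closure is: estimate $\varpi=\mathrm{curl}_h w_h^S-\nu a^S-(\mathrm{curl}_h u_h^L-\nu b^L)$ in $H^{m-2}$ via its transport equation \eqref{12075}, noting $\varpi(0)=0$ because the limit initial data in \eqref{Lim-sys} is prepared so that the initial potential vorticities coincide (using \eqref{12074}); then use the elliptic relation \eqref{12201}, $\frac{1}{\nu}\Delta_h(a^S-b^L)-\nu(a^S-b^L)=\varpi$, together with $\nu w_h^{S,\bot}+\nabla_h a^S=0$, to upgrade to $a^S-b^L\in H^{m}$ and $w_h^S-u_h^L\in H^{m-1}$; this same elliptic bound is also what allows the source term linear in $w_h^S-u_h^L$ to be absorbed by Gronwall. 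You gesture at the stream-function structure, but this upgrade and the initial-data matching are the missing steps between your energy estimate and the statement as claimed; your treatment of the $W^F$-forced terms and of $w_3-u_3^L$ otherwise matches the paper.
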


\subsection{Strichartz estimates for the fast wave }
Let $\psi\in \mathcal{S}(\mathbb{R}^{2})$ be a radial function, supported in the annulus $\mathcal{C}:=\{\eta\in\mathbb{R}^{2}:\frac{3}{4}\leq |\eta|\leq \frac{8}{3}\}$ such that
\begin{equation*}
\sum\limits_{k\in\mathbb{Z}}\psi_{k}(\eta)=1,~~\forall\,\eta\neq0,
\end{equation*}
where $\psi_{k}(\eta)=\psi(2^{-k}\eta)$. We define the operators
\begin{equation}\label{235-1}
\begin{split}
&\Delta^h_{k}f(y):=\psi_{k}(D_h)f(y),~~k\in\mathbb{Z}.
\end{split}
\end{equation}
Then, it holds
\begin{equation*}
e^{-i\frac{\bar{\gamma}t}{\delta}p_{\pm}(D_h)}f=e^{\mp i\frac{\bar{\gamma}t}{\delta}p(D_h)}f=\sum\limits_{k\in\mathbb{Z}}e^{\mp i\frac{\bar{\gamma}t}{\delta}p(D_h)}\Delta_k^h f.
\end{equation*}

\begin{proposition}\label{pro-dis}
Let $p(\eta)$ be defined in \eqref{7252}. Then, for any $k\in\mathbb{Z}$ we have
\begin{equation*}
\|e^{\pm i\frac{\bar{\gamma}t}{\delta}p(D_h)}\Delta_k^h f\|_{L^{\infty}(\mathbb{R}^2)}\leq \frac{C2^{2k}}{1+\frac{|t|}{\mathcal{M}_k\delta}}\|\Delta_k^h f\|_{L^1(\mathbb{R}^2)},
\end{equation*}
where
\begin{equation}\label{12172}
\mathcal{M}_k=
\begin{cases}
1,~~\mathrm{if}~~k\leq\log_{2}{\nu};\\
2^{3k},~~\mathrm{if}~~k>\log_{2}{\nu},
\end{cases}
\end{equation}
and the positive constant $C$ is independent of $\delta$, $f$, $k$ and $t$.
\end{proposition}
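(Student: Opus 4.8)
The statement to prove is a dispersive decay estimate of the form
\[
\|e^{\pm i\frac{\bar{\gamma}t}{\delta}p(D_h)}\Delta_k^h f\|_{L^{\infty}(\mathbb{R}^2)}\leq \frac{C2^{2k}}{1+\frac{|t|}{\mathcal{M}_k\delta}}\|\Delta_k^h f\|_{L^1(\mathbb{R}^2)},
\]
where $p(\eta)=\sqrt{\nu^2+|\eta|^2}$ and $\mathcal{M}_k$ switches between $1$ (low frequencies $2^k\lesssim\nu$) and $2^{3k}$ (high frequencies $2^k\gtrsim\nu$). This is a standard stationary-phase/van der Corput argument, and I would organize it as follows.

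\textbf{Plan.} First, by writing $e^{\pm i\frac{\bar\gamma t}{\delta}p(D_h)}\Delta_k^h f = K_{k,t}\ast \Delta_k^h f$ with kernel
\[
K_{k,t}(y)=\frac{1}{(2\pi)^2}\int_{\mathbb{R}^2}e^{i y\cdot\eta \pm i\frac{\bar\gamma t}{\delta}p(\eta)}\psi_k(\eta)\,d\eta,
\]
Young's inequality reduces everything to the pointwise bound $\|K_{k,t}\|_{L^\infty(\mathbb{R}^2)}\le C 2^{2k}(1+|t|/(\mathcal{M}_k\delta))^{-1}$. Rescaling $\eta=2^k\zeta$ puts the integral over the fixed annulus $\mathrm{supp}\,\psi$, producing a prefactor $2^{2k}$ and a phase $2^k y\cdot\zeta \pm \frac{\bar\gamma t}{\delta}p(2^k\zeta)$; so it suffices to show the oscillatory integral with amplitude $\psi(\zeta)$ and phase $\lambda\,\omega\cdot\zeta + \tau\, p(2^k\zeta)$ is $O((1+|\tau|/\mathcal{M}_k)^{-1})$ uniformly in the unit direction $\omega$ and $\lambda\in\mathbb{R}$, where $\tau=\bar\gamma t/\delta$. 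The trivial bound handles $|\tau|\lesssim\mathcal{M}_k$, so the content is the regime $|\tau|\gg\mathcal{M}_k$.

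\textbf{Key steps.} The decay comes from non-degeneracy of the phase $\Phi(\zeta)=\lambda\,\omega\cdot\zeta+\tau\,p(2^k\zeta)$ on the annulus $3/4\le|\zeta|\le 8/3$. The relevant quantities are the gradient $\nabla_\zeta p(2^k\zeta)=2^k\frac{2^k\zeta}{\sqrt{\nu^2+4^k|\zeta|^2}}$, which has size $\asymp 2^k\min\{1,\,2^k/\nu\}$, and the second derivatives (Hessian) of $p(2^k\zeta)$, whose eigenvalues I will compute explicitly: along the radial direction the curvature is $\asymp \frac{4^k\nu^2}{(\nu^2+4^k)^{3/2}}$ and along the angular direction $\asymp\frac{4^k}{\sqrt{\nu^2+4^k}}$. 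In the low-frequency regime $2^k\lesssim\nu$ both entries are $\asymp 4^k/\nu$ up to constants (so the Hessian determinant is $\asymp 4^{2k}/\nu^2\asymp 1$ after the $2^k$-scaling is accounted — more precisely $\det\asymp 8^k\nu/(\nu^2)^{...}$; I will track this carefully), giving genuine 2D stationary-phase decay $|\tau|^{-1}$, consistent with $\mathcal{M}_k=1$. In the high-frequency regime $2^k\gtrsim\nu$ the phase $p$ behaves like $|\eta|$ up to lower-order corrections; the Hessian of $|2^k\zeta|$ on the annulus is degenerate in the radial direction but has one non-vanishing angular eigenvalue $\asymp 2^k$, so one integrates first in the angular variable using van der Corput with a second-derivative lower bound $\gtrsim \tau 2^k$, gaining $(\tau 2^k)^{-1/2}$, and then in the radial variable; splitting into the region where $|\nabla\Phi|\gtrsim\tau 2^k$ (non-stationary, integrate by parts, arbitrarily fast decay) and a bounded remaining region, one obtains total decay $\asymp(\tau 2^k)^{-1/2}\cdot(\tau 2^k)^{-1/2}=(\tau 2^k)^{-1}$; but because of the radial degeneracy the worst case only yields $(\tau/2^{?})^{-1}$, which matches $\mathcal{M}_k=2^{3k}$. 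Concretely I would split into $|\lambda|\sim |\tau|2^k$ (possible stationary point, full stationary phase) versus $|\lambda|$ away from $|\tau|2^k$ (integration by parts in the radial direction), and in each case invoke the classical estimates — van der Corput's lemma for the angular integral and non-stationary phase for the radial one. Finally I would note the bound is symmetric in the $\pm$ sign since the phase only changes sign, which affects nothing.

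\textbf{Main obstacle.} The delicate point is bookkeeping the $\nu$- and $2^k$-dependence of the Hessian eigenvalues of $p(2^k\cdot)$ across the two regimes so that the resulting decay rate assembles into exactly $(1+|t|/(\mathcal{M}_k\delta))^{-1}$ with a constant depending only on $\nu$ (and the fixed cutoff $\psi$), not on $k$ or $\delta$; in particular the high-frequency case requires combining an angular van der Corput estimate with a radial integration by parts on a $k$-dependent region and checking the geometric-series summation of the tail terms closes with the stated power $2^{3k}$ rather than something weaker. I would also need the standard fact that the amplitude $\psi$ and all its derivatives are $O(1)$ with support in a fixed annulus, so the implied constants in van der Corput are uniform. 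Once those computations are in place, Young's inequality $\|K_{k,t}\ast g\|_{L^\infty}\le\|K_{k,t}\|_{L^\infty}\|g\|_{L^1}$ closes the proof.
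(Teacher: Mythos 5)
Your overall reduction (write the propagator as convolution with the kernel $K_{k,t}$, apply Young's inequality, rescale $\eta=2^k\zeta$ onto the fixed annulus, then estimate an oscillatory integral with phase involving $q_k(\zeta)=\sqrt{(\nu 2^{-k})^2+|\zeta|^2}$) is exactly the paper's first step. But from that point on what you offer is a plan, not a proof, and the part you defer --- the uniform-in-$k$ bookkeeping of the Hessian of the phase in the two regimes --- is precisely the content of the proposition; moreover the partial numbers you do quote are wrong. At low frequency $2^k\le\nu$ the Hessian of $p$ has radial eigenvalue $\nu^2(\nu^2+|\eta|^2)^{-3/2}\sim\nu^{-1}$ and angular eigenvalue $(\nu^2+|\eta|^2)^{-1/2}\sim\nu^{-1}$, so after rescaling the phase $\tau\,p(2^k\zeta)$ has both second derivatives of size $\tau 4^k/\nu$, not the quantities "$\asymp 4^k/\nu$ entries, $\det\asymp 1$" you assert; a correct two-dimensional stationary-phase count then yields a kernel bound of order $2^{2k}\,\nu\delta/(4^k|t|)$, i.e.\ an effective constant $\nu 4^{-k}$ in place of $\mathcal{M}_k=1$, which degenerates as $k\to-\infty$. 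So the low-frequency case does not close as sketched, and this is not a cosmetic issue: it is exactly the regime where the claimed uniformity in $k$ has to be earned. Your high-frequency discussion is likewise unfinished --- the asserted combination "$(\tau 2^k)^{-1/2}\cdot(\tau 2^k)^{-1/2}$" contradicts the radial degeneracy you have just invoked, and the final rate is left as the literal placeholder "$(\tau/2^{?})^{-1}$" rather than being matched to $2^{3k}$ (a correct angular van der Corput gain $(\tau 2^k)^{-1/2}$ combined with the radial curvature $\sim\tau\nu^2 2^{-k}$, or the trivial bound when that is small, would in fact suffice here, but you never assemble it).

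For comparison, the paper does not locate stationary points at all: it estimates $|I_k|^2$ as a double integral in $(\eta,\eta+\tau)$, integrates by parts three times with the vector field $b=\big(\nabla\phi(\eta+\tau)-\nabla\phi(\eta)\big)/|\nabla\phi(\eta+\tau)-\nabla\phi(\eta)|^2$, and feeds in only a lower bound on the Hessian of $q_k$ via the mean value theorem, together with uniform bounds on $\nabla^j q_k$, $2\le j\le5$; the frequency-dependent constant $\mathcal{M}_k$ then emerges from that single lower bound, with no case analysis of stationary versus non-stationary regions and no separate angular/radial treatment. If you want to pursue your direct stationary-phase/van der Corput route, you must (i) actually carry out the regime-by-regime estimates with the corrected Hessian eigenvalues, and (ii) confront the fact that at very low frequencies the honest rate you obtain carries the factor $\nu 4^{-k}$ rather than the stated $\mathcal{M}_k=1$, so you would either need an additional argument for that range of $k$ or a statement with a $k$-dependent constant there; as written, the proposal establishes neither.
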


\begin{proof}
We only provide the proof for the sign `$+$' since the other is obtained similarly. Let $\tilde{\psi}\in\mathcal{S}(\mathbb{R}^{2})$ be a radial function such that
\begin{equation*} %\label{198}
0\leq\tilde{\psi}\leq1,~~\operatorname{supp}\,\tilde{\psi}\subset\left\{\eta\in\mathbb{R}^{2}:\frac{1}{2}\leq|\eta|\leq3\right\},~~\tilde{\psi}\equiv1~~\mathrm{on}~~\mathcal{C}=\left\{\eta\in\mathbb{R}^{2}:\frac{3}{4}\leq |\eta|\leq \frac{8}{3}\right\},
\end{equation*}
and for $k\in\mathbb{Z}$ we set $\tilde{\psi}_{k}(\eta):=\tilde{\psi}(2^{-k}\eta)$. Then we may rewrite
\begin{equation*}
e^{i\frac{\bar{\gamma}t}{\delta}p(D_h)}\Delta_k^h f=\frac{1}{(2\pi)^2}\int_{\mathbb{R}^2}{e^{i y\cdot\eta+i\frac{\bar{\gamma}t}{\delta}p(\eta)}\tilde{\psi}_k(\eta)\psi_k(\eta)\widehat{f}(\eta)}d\eta.
\end{equation*}
Thus, by using the convolution inequality and a stretching on the variable $\eta$, it suffices to show that
\begin{equation*}
\sup\limits_{y\in\mathbb{R}^2}\bigg|\int_{\mathbb{R}^2}{e^{i y\cdot\eta+i\frac{\bar{\gamma}t}{\delta}p(\eta)}\tilde{\psi}_k(\eta)}d\eta\bigg|=\sup\limits_{y\in\mathbb{R}^2}\bigg|2^{2k}\int_{\mathbb{R}^2}{e^{i y\cdot2^{k}\eta+i2^k\frac{\bar{\gamma}t}{\delta}q(\eta)}\tilde{\psi}(\eta)}d\eta\bigg|\leq \frac{C2^{2k}}{1+\frac{|t|}{\mathcal{M}_k\delta}},
\end{equation*}
where
\begin{equation*}
q(\eta)=\sqrt{\sigma_k^2+|\eta|^2},~~\mathrm{and }~~\sigma_k=\frac{\nu}{2^k}.
\end{equation*}
As the $L^{\infty}$ norm is invariant under dilation, it reduces to prove that
\begin{equation}\label{12171}
\sup\limits_{y\in\mathbb{R}^2}|I_k(y)|\leq \frac{C2^{2k}}{1+\frac{|t|}{\mathcal{M}_k\delta}},
\end{equation}
where
\begin{equation*}
I_k(y)=2^{2k}\int_{\mathbb{R}^2}{e^{i\theta_k\phi(\eta)}\tilde{\psi}(\eta)}d\eta,~~\mathrm{with}~~\theta_k=2^k\frac{\bar{\gamma} t}{\delta},~~\phi(\eta)=y\cdot\eta+q(\eta).
\end{equation*}

By using a partition of unity to cover the support of the original $\tilde{\psi}$, $\mathrm{i.e.}$, $\{1/2\leq |\eta|\leq 3\}$, we may assume that the support of $\tilde{\psi}$ is sufficiently small such that for any $\tilde{\eta},\eta\in\mathrm{supp}\,\tilde{\psi}$, the line segment connecting $\tilde{\eta}$ and $\eta$ lies wholly in $\{1/4<|\eta|<4\}$. It holds that
\begin{equation*}
\begin{split}
|I_k|^2=I_k\bar{I}_k&=2^{4k}\iint_{\mathbb{R}^2 \times \mathbb{R}^2}{e^{i\theta_k(\phi(\tilde{\eta})-\phi(\eta))}\tilde{\psi}(\tilde{\eta})\bar{\tilde{\psi}}(\eta)}d\tilde{\eta} d\eta\\
&=2^{4k}\iint_{\mathbb{R}^2 \times \mathbb{R}^2}{e^{i\theta_k(\phi(\eta+\tau)-\phi(\eta))}\tilde{\psi}(\eta+\tau)\bar{\tilde{\psi}}(\eta)}d\eta d\tau.
\end{split}
\end{equation*}
Let us define the operator $L$ by
\begin{equation*}
Lf(\eta)=\frac{1}{i\theta_k}\big(b\cdot\nabla_{\eta} f\big)(\eta),~~\mathrm{where}~~b=\frac{\nabla_{\eta}\phi(\eta+\tau)-\nabla_{\eta}\phi(\eta)}{|\nabla_{\eta}\phi(\eta+\tau)-\nabla_{\eta}\phi(\eta)|^{2}}.
\end{equation*}
Then one sees that $L^{\top}f=-\frac{1}{i\theta_k}\nabla_{\eta}\cdot(bf)$ and
\begin{equation}\label{742}
\begin{split}
|I_k|^2&=2^{4k}\iint_{\mathbb{R}^2 \times \mathbb{R}^2}{L^N\Big(e^{i\theta_k(\phi(\eta+\tau)-\phi(\eta))}\Big)\tilde{\psi}(\eta+\tau)\bar{\tilde{\psi}}(\eta)}d\eta d\tau\\
&=2^{4k}\iint_{\mathbb{R}^2 \times \mathbb{R}^2}{e^{i\theta_k(\phi(\eta+\tau)-\phi(\eta))} (L^{\top})^N\Big(\tilde{\psi}(\eta+\tau)\bar{\tilde{\psi}}(\eta)\Big)}d\eta d\tau
\end{split}
\end{equation}
for any $N\in\mathbb{N}$. Taking $N=3$ in \eqref{742} gives
\begin{equation}\label{12056}
|I_k|^2\leq C2^{4k}\iint_{\mathbb{R}^2 \times \mathbb{R}^2}{\frac{1}{|\theta_k|^3}\sum\limits_{j_1+j_2+j_3=0}^3|\nabla_{\eta}^{j_1}b|\cdot|\nabla_{\eta}^{j_2}b|\cdot|\nabla_{\eta}^{j_3}b|}d\eta d\tau.
\end{equation}
Direct calculations show that for any $\eta\in\{1/4<|\eta|<4\}$ it holds
\begin{equation*}
\begin{split}
&|\mathrm{det}\,\mathcal{D}^{2}q(\eta)|=\bigg|\frac{\sigma_k^2}{\sqrt{\sigma_k^2+|\eta|^2}}\bigg|\geq \mathcal{M}_{1,k}:=\begin{cases}
\sigma_k,~~\mathrm{if}~~\sigma_k\geq1;\\
\sigma_k^2,~~\mathrm{if}~~\sigma_k<1,
\end{cases}\\
&\mathrm{and}~|\nabla_{\eta}^{2} q|,~|\nabla_{\eta}^{3}q|,~|\nabla_{\eta}^{4}q|,~ |\nabla_{\eta}^{5}q|\leq C,
\end{split}
\end{equation*}
where $\mathcal{D}^{2}q$ is the Hessian matrix of $q$. By recalling the definition of $\phi$, it follows from the mean value theorem that
\begin{equation*}
|b|\leq\frac{C}{|\nabla_{\eta} q(\eta+\tau)-\nabla_{\eta} q(\eta)|}\leq\frac{C}{\inf_{1/4<|\eta|<4}|\mathrm{det}\,\mathcal{D}^{2}q(\eta)||\tau|}\leq \frac{C}{\mathcal{M}_{1,k}|\tau|},
\end{equation*}
In the same spirit, it is verified that
\begin{equation*}
\begin{split}
|\nabla_{\eta} b|&\leq \frac{C|\nabla_{\eta}^{2}q(\eta+\tau)-\nabla_{\eta}^{2}q(\eta)|}{|\nabla_{\eta} q(\eta+\tau)-\nabla_{\eta} q(\eta)|^{2}}\leq \frac{C\sup_{1/4<|\eta|<4}|\nabla_{\eta}^{3}q(\eta)|}{\inf_{1/4<|\eta|<4}|\mathrm{det}\,\mathcal{D}^{2}q(\eta)|^{2}|\tau|}\leq\frac{C}{\mathcal{M}_{1,k}^2|\tau|}
\end{split}
\end{equation*}
\begin{equation*}
\begin{split}
|\nabla_{\eta}^{2} b|&\leq\frac{C}{|\tau|}\Big(\frac{1}{\mathcal{M}^2_{1,k}}+\frac{1}{\mathcal{M}_{1,k}^3}\Big),~~\mathrm{and}~~|\nabla_{\eta}^{3} b|\leq \frac{C}{|\tau|}\Big(\frac{1}{\mathcal{M}^2_{1,k} }+\frac{1}{\mathcal{M}_{1,k}^3}+\frac{1}{\mathcal{M}_{1,k}^4}\Big).
\end{split}
\end{equation*}

With the estimates at hand, \eqref{12056} is further controlled by
\begin{equation*}
|I_k|^2\leq C2^{4k}\int\limits_{|\tau|\leq 4}{\frac{1}{|\theta_k\tau|^3}\Big(\frac{1}{\mathcal{M}_{1,k}^3}+\frac{1}{\mathcal{M}_{1,k}^6}\Big)}d\tau\leq
\begin{cases}
C2^{4k}\displaystyle\int\limits_{|\tau|\leq 4}{\frac{1}{|\theta_k\sigma_k\tau|^3}}d\tau,~~\mathrm{if}~~\sigma_k\geq 1;\\[12pt]
C2^{4k}\displaystyle\int\limits_{|\tau|\leq 4}{\frac{1}{|\theta_k\sigma_k^4\tau|^3}}d\tau,~~\mathrm{if}~~\sigma_k<1.
\end{cases}
\end{equation*}
Noting also that $|I_k|^2\leq C2^{4k}\int\nolimits_{|\tau|\leq 4}{1}d\tau$ by taking $N=0$ in \eqref{742}, thus
\begin{equation*}
|I_k|^2\leq \begin{cases}
C2^{4k}\displaystyle\int{\frac{1}{1+|\theta_k\sigma_k\tau|^3}}d\tau\leq \frac{C2^{4k}}{|\theta_k\sigma_k|^2}\leq C2^{4k}\Big(\frac{\delta}{|t|}\Big)^2,~~\mathrm{if}~~\sigma_k\geq1;\\[12pt]
C2^{4k}\displaystyle\int{\frac{1}{1+|\theta_k\sigma_k^4\tau|^3}}d\tau\leq \frac{C2^{4k}}{|\theta_k\sigma_k^4|^2}\leq C2^{4k}\Big(\frac{2^{3k}\delta}{|t|}\Big)^2,~~\mathrm{if}~~\sigma_k<1.
\end{cases}
\end{equation*}
\eqref{12171} follows from the above inequality readily, which completes the proof of Proposition \ref{pro-dis}.

\end{proof}

Using the dispersion property of the propagator, we may construct the following Strichartz-type decay estimate:

\begin{proposition}\label{pro-Str}
Let $2\leq r,\,\tilde{r}\leq \infty$, $2\leq q \leq \infty$ be subject to
\begin{equation*}
\frac{1}{q}+\frac{1}{r}\leq\frac{1}{2},~~\frac{1}{\tilde{q}}+\frac{1}{\tilde{r}}=\frac{1}{2},
~~(q,r)\neq(2,\infty),
~~(\tilde{q},\tilde{r})\neq(2,\infty).
\end{equation*}
Then, for any $\eps>0$, $\delta>0$, $f\in L^{2}(\mathbb{R}^{2})$, $F\in L^{\tilde{q}'}(\mathbb{R};L^{\tilde{r}'}(\mathbb{R}^{2}))$ and $k\in\mathbb{Z}$, it holds that
\begin{equation}\label{1910}
\|e^{\pm i\frac{\bar{\gamma}t}{\delta}p(D_h)}\Delta_k^h f\|_{L^q(\mathbb{R};L^{r}(\mathbb{R}^2))}\leq C2^{2k(\frac{1}{2}-\frac{1}{r})}(\mathcal{M}_k\delta)^{\frac{1}{q}}\|\Delta_k^h f\|_{L^{2}(\mathbb{R}^{2})},
\end{equation}
\begin{equation}\label{1911}
\bigg\|\int^{t}_{-\infty}{e^{\pm i\frac{\bar{\gamma}(t-s)}{\delta}p(D_h)}\Delta_k^h F(s)}ds\bigg\|_{L^q(\mathbb{R};L^{r}(\mathbb{R}^2))}\leq C2^{2k(\frac{1}{2}-\frac{1}{r}+\frac{1}{\tilde{q}})}(\mathcal{M}_k\delta)^{\frac{1}{q}+\frac{1}{\tilde{q}}}\|\Delta_k^h F\|_{L^{\tilde{q}'}(\mathbb{R};L^{\tilde{r}'}(\mathbb{R}^2))},
\end{equation}
where $1/\tilde{r}+1/\tilde{r}'=1$, $1/\tilde{q}+1/\tilde{q}'=1$, and the positive constants $C$ are independent of $\delta$, $f$, $F$ and $k$.

\end{proposition}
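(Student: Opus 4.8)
The plan is to derive \eqref{1910} and \eqref{1911} from the pointwise dispersive bound in Proposition \ref{pro-dis} by the standard $TT^{*}$ argument of Keel--Tao, with the twist that the time-scale in the decay estimate is $\mathcal{M}_{k}\delta$ rather than $1$, so that all the resulting Strichartz constants carry the factor $(\mathcal{M}_{k}\delta)^{1/q}$. Write $T(t)f:=e^{\pm i\frac{\bar{\gamma}t}{\delta}p(D_h)}\Delta_k^h f$. Since $p(D_h)$ is a Fourier multiplier with a real symbol, $T(t)$ is unitary on $L^{2}(\mathbb{R}^{2})$ uniformly in $t$, which gives the trivial $L^{2}_y$ bound; Proposition \ref{pro-dis} gives the dispersive bound
\begin{equation*}
\|T(t)T(s)^{*}g\|_{L^{\infty}(\mathbb{R}^2)}\leq \frac{C2^{2k}}{1+\frac{|t-s|}{\mathcal{M}_k\delta}}\|g\|_{L^{1}(\mathbb{R}^2)},
\end{equation*}
because $T(t)T(s)^{*}=e^{\pm i\frac{\bar{\gamma}(t-s)}{\delta}p(D_h)}\Delta_k^h(\Delta_k^h)^{*}$ and the extra localization $\tilde\psi_k$ is harmless (it only multiplies the dispersion constant by a fixed factor). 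Interpolating the $L^{1}\to L^{\infty}$ and $L^{2}\to L^{2}$ bounds yields, for $2\le r\le\infty$,
\begin{equation*}
\|T(t)T(s)^{*}g\|_{L^{r}(\mathbb{R}^2)}\leq C\,2^{2k(1-\frac{2}{r})}\Big(1+\tfrac{|t-s|}{\mathcal{M}_k\delta}\Big)^{-(1-\frac{2}{r})}\|g\|_{L^{r'}(\mathbb{R}^2)}.
\end{equation*}

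Next I would feed this into the abstract Keel--Tao machinery. Setting $\sigma:=1-\frac{2}{r}\in[0,1]$ (the "decay rate"), the estimate above has the form $\|T(t)T(s)^{*}\|_{L^{r'}\to L^{r}}\le C 2^{2k\sigma}(1+|t-s|/(\mathcal{M}_k\delta))^{-\sigma}$. After the change of time variable $t=\mathcal{M}_k\delta\,t'$, the propagator $\widetilde T(t'):=T(\mathcal{M}_k\delta\,t')$ satisfies the untwisted hypotheses of Keel--Tao with decay exponent $\sigma$ and an extra dimensionless constant $2^{2k\sigma}$; the theorem then gives, for every admissible pair $(q,r)$ with $\frac1q\le\sigma(\frac12-\frac1r)\cdot$ (equivalently $\frac1q+\frac1r\le\frac12$ when $\sigma=1-\frac2r$, which is exactly the hypothesis of Proposition \ref{pro-Str}) and $(q,r)\ne(2,\infty)$,
\begin{equation*}
\|\widetilde T(t')f\|_{L^{q}_{t'}L^{r}_y}\leq C\,2^{2k\sigma/2}\|f\|_{L^2_y}=C\,2^{2k(\frac12-\frac1r)}\|f\|_{L^2_y},
\end{equation*}
and undoing the time rescaling multiplies the right-hand side by $(\mathcal{M}_k\delta)^{1/q}$, which is precisely \eqref{1910}. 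The inhomogeneous bound \eqref{1911} follows from the same Keel--Tao statement, which also provides the retarded estimate $\big\|\int_{s<t}\widetilde T(t')\widetilde T(s')^{*}F(s')\,ds'\big\|_{L^{q}_{t'}L^{r}_y}\lesssim 2^{2k(\sigma/2+\sigma/2)}\|F\|_{L^{\tilde q'}_{t'}L^{\tilde r'}_y}$ for two admissible pairs $(q,r)$, $(\tilde q,\tilde r)$; here the second pair is taken on the "sharp line" $\frac1{\tilde q}+\frac1{\tilde r}=\frac12$, consistent with the hypothesis. Writing out the exponents: one power $2^{2k(\frac12-\frac1r)}$ from the $T$ side, one power $2^{2k\frac1{\tilde q}}$ from converting the $L^{\tilde r'}$-duality exponent (since $\frac12-\frac1{\tilde r'}=\frac1{\tilde r}-\frac12+1-\frac1{\tilde r}$... more cleanly, $\frac1{\tilde q}=\frac12-\frac1{\tilde r}$ so the frequency gain on the $F$-side is $2^{2k(\frac12-\frac1{\tilde r})}=2^{2k\frac1{\tilde q}}$), giving the total $2^{2k(\frac12-\frac1r+\frac1{\tilde q})}$; and undoing the rescaling produces $(\mathcal{M}_k\delta)^{\frac1q+\frac1{\tilde q}}$, matching \eqref{1911}.

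The one genuinely delicate point is bookkeeping the two independent parameters $k$ and $\delta$ through the rescaling: one must check that the dimensionless constant $2^{2k}$ in the dispersive bound and the time-scale $\mathcal{M}_k\delta$ interact correctly, i.e. that after normalizing time by $\mathcal{M}_k\delta$ the hypothesis "$\|U(0)U(s)^{*}\|_{L^1\to L^\infty}\le C h(s)$ with $h(s)=\min(1,|s|^{-\sigma})$ up to the fixed constant $C2^{2k}$" is exactly what Keel--Tao need, and that the $2^{2k}$ contributes the stated $2^{2k(\frac12-\frac1r)}$ and nothing more. I expect this to be the main obstacle only in the sense of care, not depth: once the abstract theorem is invoked with the correctly normalized propagator, \eqref{1910}--\eqref{1911} drop out. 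The endpoint exclusions $(q,r)\ne(2,\infty)$ and $(\tilde q,\tilde r)\ne(2,\infty)$ are imposed precisely because, in dimension $2$ with $\sigma=1$ (the non-localized regime $k\le\log_2\nu$), the $(2,\infty)$ endpoint of Keel--Tao fails, so we stay away from it throughout; since $\mathcal{M}_k\ge1$ always, the worst case for the $\delta$-power is uniform and no further case distinction on $k$ is needed for the final form of the estimates.
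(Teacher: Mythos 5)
Your proposal is correct and takes essentially the same route as the paper, which likewise deduces Proposition \ref{pro-Str} from the dispersive bound of Proposition \ref{pro-dis} via the standard $TT^{*}$ argument (interpolation, Hardy--Littlewood--Sobolev, duality/Christ--Kiselev for the retarded term), with your time rescaling by $\mathcal{M}_k\delta$ serving only as bookkeeping for the factors $(\mathcal{M}_k\delta)^{1/q}$ and $2^{2k(\frac12-\frac1r)}$, which you compute correctly. One cosmetic point: the admissibility condition should be stated with the $L^1\to L^{\infty}$ decay exponent $\sigma=1$ (giving $\frac1q\le\frac12-\frac1r$ directly), not with the interpolated exponent $1-\frac2r$ as in your parenthetical equivalence; your final range and constants are nonetheless the right ones.
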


Indeed, Proposition \ref{pro-Str} can be justified by combining Proposition \ref{pro-dis} and the standard TT* argument with the help of the Hardy-Littlewood-Sobolev inequality, the interpolation theorem and the duality argument. We also refer the readers to Proposition 3.2 in \cite{Mu} for a similar proof. Now, using the continuous embedding $\dot{B}^{0}_{\infty,1}\hookrightarrow L^{\infty}$ and Proposition \ref{pro-Str} with $r=\infty$, $\tilde{q}=\infty$, $\tilde{r}=2$, we get, for any $q\in(2,\infty)$ and $\ell\in \mathbb{N} \cup \{ 0\} $,
\begin{equation}\label{12065}
\begin{split}
&\|\nabla_h^{\ell} e^{\pm i\frac{\bar{\gamma}t}{\delta} p(D_h)}f\|_{L^q(\mathbb{R};L^{\infty}(\mathbb{R}^2))}\\
&
\quad
\leq C\|e^{\pm i\frac{\bar{\gamma}t}{\delta} p(D_h)} f\|_{L^{q}(\mathbb{R};\dot{B}^{\ell}_{\infty,1}(\mathbb{R}^2))}\\
&
\quad
\leq C\sum\limits_{k\in\mathbb{Z}}2^{k\ell}\|e^{\pm i\frac{\bar{\gamma}t}{\delta} p(D_h)} \Delta^h_k f\|_{L^{q}(\mathbb{R};L^{\infty}(\mathbb{R}^2))}\\
&
\quad
\leq C\delta^{\frac{1}{q}}\bigg(\sum\limits_{k\leq \log_{2}{\nu}}2^{k(\ell+1)}\|\Delta_k^h f\|_{L^{2}(\mathbb{R}^2)}+\sum\limits_{k> \log_{2}{\nu}}2^{k(\ell+1+\frac{3}{q})}\|\Delta_k^h f\|_{L^{2}(\mathbb{R}^2)}\bigg)\\
&
\quad
\leq C\delta^{\frac{1}{q}}\|f\|_{\dot{B}^{\ell+1}_{2,1}\cap\dot{B}^{\ell+1+\frac{3}{q}}_{2,1} (\mathbb{R}^2)}\leq C\delta^{\frac{1}{q}}\|f\|_{H^{\ell+3}(\mathbb{R}^2)},
\end{split}
\end{equation}
and
\begin{equation}\label{12066}
\begin{split}
&\bigg\|\nabla^{\ell}_h\int^{t}_{-\infty}{e^{\pm i\frac{\bar{\gamma}(t-s)}{\delta} p(D_h)} F(s)}ds\bigg\|_{L^q(\mathbb{R};L^{\infty}(\mathbb{R}^2))}\\
&
\quad
\leq C\sum\limits_{k\in\mathbb{Z}}2^{k\ell}\bigg\|\int^{t}_{-\infty}{e^{\pm i\frac{\bar{\gamma}(t-s)}{\delta} p(D_h)} \Delta^h_k F(s)}ds\bigg\|_{L^q(\mathbb{R};L^{\infty}(\mathbb{R}^2))}\\
&
\quad
\leq C\delta^{\frac{1}{q}}\bigg(\sum\limits_{k\leq \log_{2}{\nu}}2^{k(\ell+1)}\|\Delta_k^h F\|_{L^1(\mathbb{R};L^{2}(\mathbb{R}^2))}+\sum\limits_{k> \log_{2}{\nu}}2^{k(\ell+1+\frac{3}{q})}\|\Delta_k^h F\|_{L^1(\mathbb{R};L^{2}(\mathbb{R}^2))}\bigg)\\
&
\quad
\leq C\delta^{\frac{1}{q}}\|F\|_{L^1(\mathbb{R};\dot{B}^{\ell+1}_{2,1}\cap\dot{B}^{\ell+1+\frac{3}{q}}_{2,1}(\mathbb{R}^2))}\leq C\delta^{\frac{1}{q}}\|F\|_{L^1(\mathbb{R};H^{\ell+3}(\mathbb{R}^2))}.
\end{split}
\end{equation}
Thus, combining \eqref{7252}, \eqref{12071}, \eqref{12073}, \eqref{12065} and \eqref{12066} we get, for $q\in(2,\infty)$ and $\ell \in   ( \mathbb{N} \cup \{ 0\} )    \cap[0,m-1]$,
\begin{equation}\label{12083}
\begin{split}
&\|\nabla_h^{\ell} W^F\|_{L^q(0,T^*;L^{\infty}(\mathbb{R}^2))}\leq \sum\limits_{j=\pm}\|\nabla_h^{\ell} \mathcal{P}_{(j)}W\|_{L^q(0,T^*;L^{\infty}(\mathbb{R}^2))}\\
&~~\leq C\sum\limits_{j=\pm}\bigg(\|\nabla_h^{\ell} e^{i\frac{\bar{\gamma}t}{\delta}p_j(D_h)}\mathcal{P}_{(j)}W_{0}\|_{L^q(0,T^*;L^{\infty}(\mathbb{R}^2))}\\
&~~~~~~~~+\bigg\|\nabla_h^{\ell} \int_{0}^{t}{e^{i\bar{\gamma}\frac{t-s}{\delta}p_j(D_h)}\mathcal{P}_{(j)}\widetilde{\mathcal{N}}(W,\nabla_h W)(s)}ds\bigg\|_{L^q(0,T^*;L^{\infty}(\mathbb{R}^2))}\bigg)\\
&~~\leq C\delta^{\frac{1}{q}}\big(\|W_0\|_{H^{\ell+3}(\mathbb{R}^2)}+\|\widetilde{\mathcal{N}}(W,\nabla_h W)\|_{L^1(0,T^*;H^{\ell+3}(\mathbb{R}^2))}\big)\\
&~~\leq C\delta^{\frac{1}{q}}\big(\|W_0\|_{H^{m+3}(\mathbb{R}^2)}+\|W\|^2_{L^2(0,T^*;H^{m+3}(\mathbb{R}^2))}\big).
\end{split}
\end{equation}
Thus, \eqref{12091}$_2$ is proved.

\subsection{Convergence of the slow part}
Now we are ready to prove the convergence of the slow part $W^S$ and finish the proof of Theorem \ref{th-2d}. Observe that $m\geq3$ here. By taking $\mathrm{curl}_h$\eqref{11183}$_2-\nu\times$\eqref{11183}$_1-$\eqref{Lim-sys}$_1$ and noticing that $\mathrm{curl}_h w_h-\nu a=\mathrm{curl}_h w_h^S-\nu a^S$, we get
\begin{equation}\label{12075}
\begin{split}
&\partial_t\varpi+w_h\cdot\nabla_h \varpi=\mathscr{R},
\end{split}
\end{equation}
where
\begin{equation*}
\begin{split}
&\varpi=\mathrm{curl}_h w_h^S-\nu a^S-(\mathrm{curl}_h u^L_{h}-\nu b^L),\\
&\mathscr{R}=-w^F_h\cdot\nabla_h(\mathrm{curl}_h u^L_h-\nu b^L)-\mathrm{curl}_h w_h \operatorname{div}_h w^F_h+\nu\bar{\gamma}a\operatorname{div}_h w_h^F\\
&~~~~~~-(w_h^S-u^L_h)\cdot\nabla_h(\mathrm{curl}_h u^L_h-\nu b^L).
\end{split}
\end{equation*}
By the standard energy estimate, we may infer for $t\in(0,T^*]$ that
\begin{equation}\label{12077}
\begin{split}
\frac{1}{2}\frac{d}{dt}\|\varpi(t)\|_{H^{m-2}(\mathbb{R}^2)}^2
&
\leq C\|w_h\|_{H^{m}(\mathbb{R}^2)}\|\varpi\|_{H^{m-2}(\mathbb{R}^2)}^2+C\|\mathscr{R}\|_{H^{m-2}(\mathbb{R}^2)}\|\varpi\|_{H^{m-2}(\mathbb{R}^2)}\\
&
\leq
C\|\varpi\|_{H^{m-2}(\mathbb{R}^2)}^2
+
C\Big(\|W^F\|_{W^{m-1,\infty}(\mathbb{R}^2)}+\|w^S_h-u^L_h\|_{H^{m-1}(\mathbb{R}^2)}\Big)  \\
& \qquad \qquad \qquad \qquad \qquad \qquad
\times
\|\varpi\|_{H^{m-2}(\mathbb{R}^2)},
\end{split}
\end{equation}
where we have used Theorem \ref{th-ex}. Thanks to
\begin{equation*}
\mathrm{curl}_h w^S_h=\frac{1}{\nu} \Delta_h a^S,~~\mathrm{curl}_h u^L_h=\frac{1}{\nu}\Delta_h b^L,~~\mathrm{and}~~\operatorname{div}_h w^S_h=\operatorname{div}_h u^L_h=0,
\end{equation*}
we have
\begin{equation}\label{12201}
\frac{1}{\nu}\Delta_h (a^S-b^L)-\nu(a^S-b^L)=\varpi.
\end{equation}
Thus
\begin{equation}\label{12076}
\begin{split}
\|w^S_h-u^L_h\|_{H^{m-1}(\mathbb{R}^2)}
&
\leq
C \|\nabla_h^{\bot}\Delta_h^{-1}\mathrm{curl}_h(w^S_h-u^L_h)\|_{H^{m-1}(\mathbb{R}^2)}\\
&
\leq
C \|\nabla_h^{\bot}(a^S-b^L)\|_{H^{m-1}(\mathbb{R}^2)}\leq C \|\varpi\|_{H^{m-2}(\mathbb{R}^2)}.
\end{split}
\end{equation}
Inserting \eqref{12076} into \eqref{12077}, dividing the resulting inequality by $\|\varpi\|_{H^{m-2}(\mathbb{R}^2)}$, applying Gronwall's inequality and using \eqref{12083}, we obtain for any $q\in(2,\infty)$ that
\begin{equation}\label{12101}
\begin{split}
\sup\limits_{t\in[0,T^*]}\|\varpi(t)\|_{H^{m-2}(\mathbb{R}^2)}&\leq C\|\varpi|_{t=0}\|_{H^{m-2}(\mathbb{R}^2)}+C\int^{T*}_0{\|W^F(s)\|_{W^{m-1,\infty}(\mathbb{R}^2)}}ds\leq C\delta^{\frac{1}{q}},
\end{split}
\end{equation}
where we have invoked the initial data in \eqref{Lim-sys}-\eqref{11183} and the property \eqref{12074}.
From the above inequality and \eqref{12201}-\eqref{12076}, we have for any $t\in[0,T^*]$ and $q\in(2,\infty)$ that
\begin{equation*}
\begin{split}
&\|(a^S-b^L)(t)\|_{H^{m}(\mathbb{R}^2)}+\|(w^S_h-u^L_h)(t)\|_{H^{m-1}(\mathbb{R}^2)}\leq C\|\varpi(t)\|_{H^{m-2}(\mathbb{R}^2)}\leq C\delta^{\frac{1}{q}}.
\end{split}
\end{equation*}
Finally, subtracting \eqref{Lim-sys}$_2$ from \eqref{11183}$_{3}$ we obtain
\begin{equation*}
\partial_t(w_3-u^L_3)+w_h\cdot\nabla_h (w_3-u^L_3)=-(w^S_h-u^L_h)\cdot\nabla_h u^L_3-w^F_h\cdot\nabla_h u^L_3.
\end{equation*}
Hence, similar to the derivation of \eqref{12101}, we conclude with
\begin{equation*}
\begin{split}
&\sup\limits_{t\in[0,T^*]}\|(w_3-u_3^L)(t)\|_{H^{m-1}(\mathbb{R}^2)}\\
&\quad
\leq \|(w_3-u_3^L)(0)\|_{H^{m-1}(\mathbb{R}^2)}+
C\int^{T^*}_{0}
{
\Big( \|(w^S_h-u^L_h)(s)\|_{H^{m-1}(\mathbb{R}^2)}+\|w_h^F(s)\|_{W^{m-1,\infty}(\mathbb{R}^2)}
\Big)
}ds\\
&
\quad
\leq C\delta^{\frac{1}{q}},~~\text{for all} \,q\in(2,\infty).
\end{split}
\end{equation*}
This completes the proof of Theorem \ref{th-2d}.

\vspace{5mm}
\centerline{\bf Acknowledgements}
\vspace{2mm}
Y. Li was supported by Natural Science Foundation of Anhui Province under grant number 2408085MA018, Natural Science Research Project in Universities of Anhui Province under grant number 2024AH050055.
P. Mu was supported by National Natural Science Foundation of China (Grant No. 12301279 and No. 12371229), Anhui Provincial Natural Science Foundation (Grant No. 2308085QA03), and Excellent University Research and Innovation Team in Anhui Province (Grant No. 2024AH010002).
\vspace{5mm}

\centerline{\bf Conflict of interest}
\vspace{2mm}
On behalf of all authors, the corresponding author states that there is no conflict of interest.

\vspace{5mm}

\centerline{\bf Data Availability Statement}
\vspace{2mm}
Data sharing not applicable to this article as no datasets were generated or
analyzed during the current study.

\end{document}